\documentclass[a4paper]{article}

 \usepackage{amsmath,amssymb,amsfonts,amscd}

\usepackage{amsfonts}
\usepackage{amsthm}

\newtheorem{theorem}{Theorem}[section]
\newtheorem{remark}{Remark}[section]
\newtheorem{example}{Example}

\newtheorem{corollary}{Corollary}
\newtheorem{definition}{Definition}

\usepackage[pdftex]{hyperref}

\vspace{20cm}
\vspace{20cm}

\begin{document}

\begin{center}

{\Large \textbf{Discontinuity at the fixed point in suprametric spaces}}
\\[0.4in]

\textbf{ Nicola Fabiano$^{a,}$\footnote{\textbf{%
Corresponding author.}},   Sedigheh Barootkoob$^{b}$, Hossein Lakzian$^{c}$} 
\\[0.3in]

{\small {a~~``Vin\v{c}a'' Institute of Nuclear Sciences - National 
Institute of the Republic of Serbia, University of Belgrade, Mike Petrovi\'{c}a 
Alasa 12--14, 11351 Belgrade, Serbia}}\\
{\small {b~~ Department of mathematics, Faculty of Basic Sciences, University of
 Bojnord, P.O. Box 1339, Bojnord, Iran}}\\
{\small {c~~Department of Mathematics, Payame Noor University, 19395-4697 Tehran, I.R. of Iran}}\\

{\small E-mail addresses: }$%
\begin{array}{c}
\text{\small nicola.fabiano@gmail.com\  (N. Fabiano)} \\
\text{{\small s.barutkub@ub.ac.ir\ (S. Barootkoob) }}\\
\text{{\small h}\_{\small lakzian@pnu.ac.ir\ (H.\ Lakzian)}}
\end{array}%
$\\

\bigskip

\hrule\vspace{0.1cm}\bigskip

\end{center}

\noindent{\bf Abstract.}
The aim of this paper is to generalize some fixed point theorems in the class of convex contraction of order $m$  on a complete suprametric space.
Then, we will prove that the class of convex contraction of order m is
strong enough to generate a fixed point on a complete suprametric spaces
but do not force the mapping to be continuous at the fixed point, and it can be replaced by relatively weaker
conditions of $k$-continuity or $T$-orbitally lower semi-continuous.  On this way a new and distinct
solution to the open problem of Rhoades (Contemp Math 72:233-245,
1988) is found. In sequel, we will prove some fixed point results in the setting suprametric  spaces which are generalizations of the results regarding Sehgal, 
\'Ciri\'c and Fisher's  quasi-contraction. Some examples and application will be approved our results.
 \\[.05cm]

{\bf M.S.C.2000:} 47H10,47H09.

 {\bf Keywords:} Suprametric space, convex contraction, Sehgal's quasi-contraction, \'Ciri\'c's  quasi-contraction, Fisher's  quasi-contraction.\\

\section{Introduction}

The concept of a suprametric space was introduced by
Berzig in \cite{Berzig}.
\begin{definition}\cite{Berzig}
Let $X$ be a nonempty set. A function
$d:X\times X\longrightarrow[0,\infty)$ is called a suprametric on
$X$ if it satisfies the following properties:  \\
(1) $d(x,y)=0$ if and only if $x=y$ for any $x,y\in X$;\\
(2) $d(x,y)=d(y,x)$ for any $x,y\in X$;\\
(3) $d(x,y)\leq d(x,z)  +  d(z,y)+\rho d(x, z)d(z, y)$ for some constant $\rho\in [0,\infty)$ and for any $x,y,z\in X$.
The pair $(X,d)$ is called a suprametric space.
\end{definition}

\begin{example}\cite{Berzig}\label{Berzig1} Let $(X,d)$ be a metric space. Then a function $\delta^\lambda : X\times X
\to [0, \infty)$ defined by $$\delta^\lambda(x,y) = d(x,y)( d(x,y)+\lambda)\quad\mbox{ for every $x,y \in X$}$$ is a
suprametric on $X$ with the constant $\rho=\frac{2}{\lambda}$.
\end{example}
Suppose that $(X,d)$ is a suprametric space. Then the set $B(a,r)=\{x\in X \,|\, d(a,x)<r\}$ is an open ball in $X$ which is open set in the topology $\tau$ contain of the family of all open subsets. This topology is Hausdorff; see \cite{Berzig} for more details.

Berzig in 2022 generalized  the Banach--Picard--Caccioppoli contraction principle  in complete suprametric spaces.
\begin{theorem}\cite{Berzig}
Suppose $T$  is a self-mapping of a complete suprametric space $(X,d)$. Also let $T$ be a contraction; i.e. there exists a $\alpha\in[0,1)$ such that $d(Tx,Ty)\leq \alpha d(x,y)$. Then it has a unique fixed point.
\end{theorem}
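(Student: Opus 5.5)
The plan is to run the Picard iteration and show it is Cauchy. The extra product term in the suprametric triangle inequality is controlled by the smallness of consecutive distances.

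Fix $x_0\in X$ and set $x_{n+1}=Tx_n$. By the contraction condition, $d(x_n,x_{n+1})\le \alpha^n d(x_0,x_1)=:\alpha^n c$.

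The main step is to estimate $d(x_n,x_{n+p})$. Repeated use of axiom (3) turns the triangle inequality into a multiplicative one. Since
\[
1+\rho d(x,y)\le (1+\rho d(x,z))(1+\rho d(z,y)),
\]
we obtain by induction on $p$
\[
1+\rho\, d(x_n,x_{n+p})\le \prod_{i=n}^{n+p-1}\bigl(1+\rho\, d(x_i,x_{i+1})\bigr)\le \prod_{i=n}^{\infty}\bigl(1+\rho c\,\alpha^i\bigr).
\]
If $\rho=0$, $d$ is a metric and the claim is classical, so assume $\rho>0$. Using $\log(1+t)\le t$, the right-hand side is at most $\exp\bigl(\rho c\,\alpha^n/(1-\alpha)\bigr)$. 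Hence
\[
d(x_n,x_{n+p})\le \frac{1}{\rho}\Bigl(e^{\rho c\alpha^n/(1-\alpha)}-1\Bigr)\longrightarrow 0,
\]
uniformly in $p$. So $(x_n)$ is Cauchy, and by completeness it converges to some $x^*$.

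I expect the main obstacle to be the bookkeeping of this product inequality, since the naive additive bound fails. The product form above is exactly what absorbs the $\rho$-term. Convergence here is understood as in the topology $\tau$ described above, that is, $d(x_n,x^*)\to0$.

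Next I show $Tx^*=x^*$. By axiom (3),
\[
d(x^*,Tx^*)\le d(x^*,x_{n+1})+d(x_{n+1},Tx^*)+\rho\, d(x^*,x_{n+1})\,d(x_{n+1},Tx^*).
\]
Since $d(x_{n+1},Tx^*)\le\alpha\, d(x_n,x^*)\to0$, the right-hand side tends to $0$, and therefore $x^*=Tx^*$ by axiom (1).

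For uniqueness, suppose $y^*$ is another fixed point. Then $d(x^*,y^*)=d(Tx^*,Ty^*)\le\alpha\, d(x^*,y^*)$, which forces $d(x^*,y^*)=0$ because $\alpha<1$, so $x^*=y^*$.
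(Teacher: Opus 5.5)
Your proof is correct and is essentially the argument the paper itself relies on: the statement is quoted from Berzig without proof, but it is the case $m=1$ of Theorem~\ref{t20}. Step~2 of that proof uses exactly your multiplicative inequality $1+\rho d(x_n,x_{n+p})\le\prod_{k}(1+\rho d(x_k,x_{k+1}))$, the bound $\ln(1+u)\le u$ and exponentiation to get the Cauchy property, and Step~4 gives the same uniqueness argument $d(x^*,y^*)\le\alpha\, d(x^*,y^*)$. The only minor difference is that you check $Tx^*=x^*$ by a direct suprametric estimate using the contraction, whereas the paper uses continuity of $T$ together with uniqueness of limits in the Hausdorff topology.
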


Convex contractions have been studied by many authors. Some fixed point theorems for convex contraction mappings on cone metric spaces proved by Alghamdi et al.. Then, these results was generalized by Ghorbanian et al. in the setting complete ordered metric spaces. Next,  the concept of generalized convex contractions  was introduced by Miandaragh et al. and they generalized the main results.  Convex contractions are also natural extension of   the
Kannan contraction and the Reich contraction. The following theorem was proved by Istr\v{a}tescu for convex contraction mapping of type
 2.

The concept of convex contractions of order 2 was introduced by  Istr\v{a}tescu
\cite{Istr1}.
\begin{definition}\cite{Istr1}
A continuous mapping $T:X\rightarrow X$ is said to be a convex
contraction of order 2 if there exist $a,b\in (0,1)$ such that for
all $x,y\in X$,
$$d(T^{2}x,T^{2}y)\leq ad(Tx,Ty)+bd(x,y)$$
where $a+b<1$.
 \end{definition}

 The existence of the fixed point for convex contraction mappings
 requires the continuity of the mappings in the whole domain.
 The following example showed that convex contraction mapping of
 order 2 is not a contraction mapping.
\begin{example}\cite{Istr1}
Let $X=[0,1]$ with the usual metric and define $T:X\rightarrow X$ by
the relation
$$T(x)=(x^{2}+\frac{1}{2})^{2}.$$
It is obvious that this is
continuous and not a contraction.\\
If $x,y\in X$ are arbitrary chosen, then
$$|T^{2}(x)-T^{2}(y)|=\frac{(x^{2}+y^{2})}{4}|T(x)-T(y)|+\frac{(x+y)}{8}|x-y|$$
Thus $T$ is a convex contraction of order 2.
\end{example}
Istr\v{a}tescu generalized  the Banach--Picard--Caccioppoli contraction principle by introducing convexity contraction and then proved that each
convex contraction has a unique fixed point on a   complete metric
space. 
 \begin{theorem}\cite{Istr1}
 Suppose $T$  is a continuous self-mapping of a complete metric space $(X,d)$. Also let $T$ be convex contraction of order two. Then it has a unique fixed point.
 \end{theorem}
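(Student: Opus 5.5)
We prove Istr\v{a}tescu's theorem by Picard iteration, showing that the orbit of an arbitrary point is Cauchy, then using continuity to identify its limit as a fixed point. Fix $x_0\in X$ and set $x_n=T^n x_0$, $d_n=d(x_n,x_{n+1})$. Applying the convex contraction inequality with $x=x_{n}$, $y=x_{n+1}$ gives
$$d_{n+2}\le a\,d_{n+1}+b\,d_n .$$
Put $M=\max\{d_0,d_1\}$ and $\theta=a+b<1$. By induction on $k$ we aim to show
$$d_{2k}\le \theta^k M,\qquad d_{2k+1}\le \theta^k M .$$
For the inductive step, $d_{2k+2}\le a d_{2k+1}+b d_{2k}\le \theta^{k+1}M$, and then $d_{2k+3}\le a d_{2k+2}+b d_{2k+1}\le a\theta^{k+1}M+b\theta^k M\le \theta^{k}M(a+b)=\theta^{k+1}M$. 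Hence $d_n\le M\theta^{\lfloor n/2\rfloor}\le M\theta^{-1/2}(\sqrt\theta)^n$, so the consecutive distances decay geometrically and $\sum_n d_n<\infty$.

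The step that needs the most care is the passage from summable consecutive distances to the Cauchy property. Here $d$ is a genuine metric, so the triangle inequality gives
$$d(x_n,x_m)\le\sum_{i=n}^{m-1}d_i\le \frac{M\theta^{-1/2}(\sqrt\theta)^n}{1-\sqrt\theta}\to0 .$$
The only point to check is that the geometric bound derived above is uniform in $m$, which it is. Completeness then yields $x_n\to x^*$.

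Continuity of $T$ gives $x_{n+1}=Tx_n\to Tx^*$. By uniqueness of limits, $Tx^*=x^*$.

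For uniqueness, suppose $Tx^*=x^*$ and $Ty^*=y^*$. Then $T^2x^*=x^*$ and $T^2y^*=y^*$, so the contraction inequality gives
$$d(x^*,y^*)\le a\,d(x^*,y^*)+b\,d(x^*,y^*)=(a+b)\,d(x^*,y^*).$$
Since $a+b<1$, this forces $d(x^*,y^*)=0$, hence $x^*=y^*$.
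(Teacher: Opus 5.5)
Your proof is correct and follows essentially the same route the paper uses for its generalization, Theorem~\ref{t20}, which contains this statement as the case $m=2$, $\rho=0$, $k=1$: geometric decay $d(x_n,x_{n+1})\le M\theta^{\lfloor n/2\rfloor}$ of consecutive distances, the Cauchy property by summation (the paper's product inequality reduces to the triangle inequality when $\rho=0$), continuity to identify the limit as a fixed point, and the contraction inequality at two fixed points for uniqueness. The only differences are cosmetic: you use $\max\{d_0,d_1\}$ where the paper uses $\mu=d_0+d_1$, and you run a paired induction where the paper uses strong induction.
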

 %--------------------------------------------------------------------------------%
 In 2017,  Pant introduced the following notion of
 k-continuity for the metric space.
\begin{definition}
 Suppose that $(X ,  d)$ is a metric space and $T$ is a self-mapping.
 $T$ is said to be $k$-continuous,  (k = 1,  2,  3, $\cdots$) if for every
 sequence $\{x_n\}$ in $X$ such that $T^{k-1}x_n\to z$ implies that
 $T^kx_n\to Tz$.
 \end{definition}
 \begin{definition}\cite{BishetSingRakoFisher}
A continuous function $T:X\rightarrow X$ where $(X,d)$ is a metric space is called generalized convex contraction of order $m$ if there exist $m\in \mathbb{N}$ and $a_0,a_1,...,a_{m-1}\geq 0$ such that $\sum_{i=0}^{m-1}a_j<1$ and
$$d(T^mx,T^my)\leq \sum_{i=0}^{m-1} a_jd(T^jx,T^jy),$$
for each $x,y\in X$, where by $T^j$ is called the composition of $T$ by itself $j$ items.
 \end{definition}

\begin{theorem}\cite{BishetSingRakoFisher}
Let $T:X\rightarrow X$ be a generalized convex contraction of order $m$ of a complete metric space $(X,d)$. If $T$ is orbitally continuous or $k$-continuous for $k\geq 1$ then $T$ possesses a unique fixed point in $X$.
 \end{theorem}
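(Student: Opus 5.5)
The plan is to show that the Picard orbit $x_n=T^nx_0$ is Cauchy, and then to use orbital continuity or $k$-continuity to identify its limit as a fixed point.

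\textbf{Step 1: geometric decay of consecutive distances.} Fix $x_0\in X$, put $x_n=T^nx_0$ and $d_n=d(x_n,x_{n+1})$. Set
$$\lambda=\sum_{j=0}^{m-1}a_j<1,\qquad \theta=\lambda^{1/m}<1,\qquad M=\max\{d_0,\dots,d_{m-1}\}.$$
Applying the contractive inequality with $x=x_n$ and $y=x_{n+1}$ gives
$$d_{n+m}\le \sum_{j=0}^{m-1}a_j d_{n+j}.$$
I will prove by strong induction that $d_n\le M\theta^{n}/\lambda$ for all $n$. For $n<m$ this holds because $\theta^n\ge\lambda$ there, so $M\theta^n/\lambda\ge M$. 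For the inductive step,
$$d_{n+m}\le \lambda\max_{0\le j<m} d_{n+j}\le M\theta^{n},$$
and $M\theta^{n}=M\theta^{n+m}/\lambda$, which is the required bound. Hence $d_n\le C\theta^n$ for a constant $C$.

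\textbf{Step 2: the orbit is Cauchy.} The triangle inequality then gives
$$d(x_n,x_{n+p})\le\sum_{i=n}^{n+p-1}d_i\le \frac{C\theta^n}{1-\theta}\to0.$$
Since $X$ is complete, $x_n\to z$ for some $z\in X$.

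\textbf{Step 3: $z$ is a fixed point.}
\begin{itemize}
\item If $T$ is orbitally continuous (in particular, if $T$ is continuous, as in the definition), then $x_{n+1}=Tx_n\to Tz$. Uniqueness of limits in a metric space gives $Tz=z$.
\item If $T$ is $k$-continuous, apply the definition to the sequence $y_n=x_n$. Then $T^{k-1}y_n=x_{n+k-1}\to z$, so $T^ky_n=x_{n+k}\to Tz$. But $x_{n+k}\to z$ as well, so $Tz=z$.
\end{itemize}

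\textbf{Step 4: uniqueness.} Suppose $z,w$ are both fixed points. Then $T^jz=z$ and $T^jw=w$ for every $j$, so the contractive inequality reads
$$d(z,w)\le\lambda\, d(z,w).$$
Since $\lambda<1$, this forces $d(z,w)=0$, that is, $z=w$.

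\textbf{Main obstacle.} The only step needing care is Step 1, the decay estimate, because the recursion involves $m$ earlier terms with possibly uneven coefficients. Dominating each right-hand side by $\lambda$ times the maximum of the previous $m$ terms, and calibrating the starting constant so the base cases $n<m$ hold, is what makes the induction close.
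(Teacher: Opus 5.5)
Your argument is correct and is essentially the paper's proof of Theorem~\ref{t20} specialized to $\rho=0$. You use the same steps: geometric decay of $d(x_n,x_{n+1})$ by strong induction (your bound $M\theta^n/\lambda$ with $\theta=\lambda^{1/m}$ plays the role of the paper's $\alpha^{\lfloor n/m\rfloor}\mu$), summability giving a Cauchy orbit, $k$-continuity or orbital continuity along the orbit to get $Tz=z$, and uniqueness by inserting two fixed points into the contractive inequality. The only slip is the degenerate case $\lambda=0$, where $M\theta^n/\lambda$ is undefined; run the same induction with any $\lambda'\in(0,1)$, $\lambda'\ge\lambda$, in place of $\lambda$, or note that then $T^m$ is constant.
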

In 1988, B. E. Rhoades in \cite{Rh2}, expressed an open problem as follows:
\textbf{Is there a contractive definition which is strong enough to generate a fixed point but  does not
force the mapping to be continuous at the fixed point?}

The purpose of this paper is to establish some fixed point theorems
defined on a complete suprametric space, and using
general contractive conditions. These results extend some previous fixed point
theorems in this field to convex contractive and more general convex
contractive conditions in the setting suprametric spaces. We prove
that the assumption of continuity condition can be replaced by a
relatively weaker condition of k-continuity under various settings.
On this way a new and distinct solution to the open problem of
Rhoades \cite{Rh2} is found, and some examples will be approved our results.\\

%%%%%%%%%%%%%%%%%%%%%%%%%%%
% Next section
%%%%%%%%%%%%%%%%%%%%%%%%%%%
\section{Convex contractions of order $m$  on  suprametric  spaces}
In this section, we will prove some fixed point theorems for the selfmappings satisfy in convex
 contractions of order  $ m $  ($ m\geq 2 $), two-sided convex contractions and
 generalized two-sided convex contractions on a complete suprametric spaces.
These results generalize fixed point results by Bishet et al in \cite{BishetSingRakoFisher},  Istr\v{a}tescu result in \cite{Istr1},
 \cite[Theorem 2.1]{RakoBishet}, and \cite[Theorem 2]{SuTa} and a generalization of the
Banach-Picard-Caccioppoli contraction principle in \cite{Banach}.

%---------------------------------------------------------------------------------------%
 \begin{definition}\label{D4} Let $(X, d)$ be a metric space.
 For $x\in X$,  $T:X\rightarrow X$,  $O(x;\infty)=\{x,Tx,T^{2}x,...\}$ is called the $T$-orbit of $x$.
 The mapping $T$ is
orbitally continuous at $x$ if for any sequence $\{x_n\}$ in
$O(x;\infty)$ which is  convergent to $z$,  then $Tx_n$ is convergent to $Tz$,  as $n\to\infty$.
A function $f:X\rightarrow \mathbb{R}^+$ is called $T$-orbitally
lower semi-continuous at x if $\{x_n\}$ is a sequence in
$O(x;\infty)$ and $x_n\rightarrow z$ implies $f(z)\leq \liminf_{n\rightarrow \infty} f(x_n)$.
\end{definition}
\begin{definition}
The mapping $T: X\to X$ on a metric space $(X, d)$ obeys the condition $(C;k)$ if there is a constant $k\ge 0$ such that  for every sequence $x_n\in X$,
$$x_n\to x_0\in X  \quad \Rightarrow \quad \mathfrak{D}(x_0) \le k\cdot\limsup \mathfrak{D}(x_n)\quad s.t.\quad \mathfrak{D}(x) = d(x, Tx), x\in X.$$
\end{definition}
%---------------------------------------------------------------------------------------%

The main results in this paper is to prove the following fixed point theorem in the class of convex contraction of order $m$  on a complete suprametric space. This theorem is a generalization of
\cite[Theorem 2.1]{BishtOzgur} and  \cite[Theorem 2.3]{BishetSingRakoFisher}.

%%%%%%%%%%%%%%%%%%%%%%%%%%%%%%%%%%%%%%%%%%%%%%%%%%%%%%

%---------------------------------------------------------------------------------------%
\begin{theorem}\label{t20}
Let $(X,d)$ be a complete suprametric space with constant $\rho \geq 0$, i.e., 
\begin{equation}\label{suprametric}
d(x,y) \leq d(x,z) + d(z,y) + \rho\, d(x,z)d(z,y) \quad \text{ for all } x,y,z \in X.
\end{equation}
Let $T: X \to X$ satisfy for some $m \in \mathbb{N}$ and non-negative constants $a_0,\dots,a_{m-1}$ with 
\begin{equation}\label{alpha}
\alpha = \sum_{i=0}^{m-1} a_i < 1
\end{equation}
the convex contraction condition of order $m$:
\begin{equation}\label{convex}
d(T^m x, T^m y) \leq \sum_{i=0}^{m-1} a_i \, d(T^i x, T^i y) \quad \text{ for all } x,y \in X.
\end{equation}
Then $T$ possesses a unique fixed point $x^* \in X$. Moreover, for every $x_0 \in X$, the Picard iteration $x_n = T^n x_0$ converges to $x^*$ in $(X,d)$, provided at least one of the following conditions holds:
\begin{itemize}
\item[(i)] $T$ is $k$-continuous for some $k \in \mathbb{N}$, i.e., for any sequence $\{y_n\} \subseteq X$,
$$T^{k-1}y_n \to z \quad \text{ implies } \quad T^k y_n \to Tz;$$

\item[(ii)] The displacement function $\mathfrak{D}(x) = d(x,Tx)$ is $T$-orbitally lower semicontinuous at $x_0$, i.e., for any sequence $\{y_n\} \subseteq O(x_0;\infty)$ with $y_n \to z$, we have
$$\mathfrak{D}(z) \leq \liminf_{n \to \infty} \mathfrak{D}(y_n);$$

\item[(iii)] $T$ satisfies condition $(C)$: if a sequence $\{y_n\} \subseteq O(x_0;\infty)$ satisfies $y_n \to z$ and $\mathfrak{D}(y_n) \to 0$, then $Tz = z$.
\end{itemize}
\end{theorem}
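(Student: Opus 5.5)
Fix $x_0\in X$ and set $x_n=T^nx_0$, $\delta_n=d(x_n,x_{n+1})$. The plan is to show that $\delta_n$ decays geometrically, upgrade this to Cauchy-ness via a controlled form of the suprametric inequality \eqref{suprametric}, identify the limit as a fixed point under (i), (ii) or (iii), and finish with uniqueness.

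\emph{Geometric decay of $\delta_n$.} Applying \eqref{convex} with $x=x_n$, $y=x_{n+1}$ gives $\delta_{n+m}\le\sum_{i=0}^{m-1}a_i\delta_{n+i}$. Put $M=\max\{\delta_0,\dots,\delta_{m-1}\}$. A standard induction on blocks of length $m$ then yields $\delta_n\le M\alpha^{\lfloor n/m\rfloor}$. Writing $\theta=\alpha^{1/m}<1$, this becomes $\delta_n\le C\theta^n$ with $C=M/\alpha$ (if $\alpha=0$ then $\delta_n=0$ for $n\ge m$, and the orbit is eventually constant).

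\emph{Cauchy property.} This is the main obstacle. In a suprametric space, chaining the triangle inequality gives the product bound
\[
1+\rho\, d(x_n,x_{n+p})\le\prod_{j=n}^{n+p-1}(1+\rho\,\delta_j).
\]
To prove it, use induction on $p$. The key observation is that $1+\rho d(x,y)\le(1+\rho d(x,z))(1+\rho d(z,y))$ is exactly \eqref{suprametric} multiplied by $\rho$ with $1$ added to both sides. The case $\rho=0$ is the metric case. Hence
\[
1+\rho\, d(x_n,x_{n+p})\le\exp\Big(\rho\sum_{j\ge n}\delta_j\Big)\le\exp\big(\rho C\theta^n/(1-\theta)\big)\to1,
\]
so $d(x_n,x_{n+p})\to0$ uniformly in $p$. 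By completeness, $x_n\to x^*$.

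\emph{$x^*$ is a fixed point.} Throughout, use that $\mathfrak D(x_n)=\delta_n\to0$.
\begin{itemize}
\item Under (iii), take $y_n=x_n\to x^*$; condition (iii) then gives $Tx^*=x^*$ directly.
\item Under (ii), we get $d(x^*,Tx^*)\le\liminf\delta_n=0$.
\item Under (i), take $y_n=x_n$. Since $T^{k-1}x_n=x_{n+k-1}\to x^*$, $k$-continuity gives $T^kx_n=x_{n+k}\to Tx^*$. Limits are unique because the topology is Hausdorff; alternatively, use \eqref{suprametric} to bound $d(x^*,Tx^*)\le d(x^*,x_{n+k})+d(x_{n+k},Tx^*)+\rho(\cdot)(\cdot)\to0$. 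Either way $Tx^*=x^*$.
\end{itemize}

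\emph{Uniqueness.} If $Tu=u$ and $Tv=v$, then \eqref{convex} gives $d(u,v)\le\alpha d(u,v)$. Since $\alpha<1$, this forces $d(u,v)=0$, so $u=v$.

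Every orbit converges to a fixed point, and there is only one fixed point, so every Picard sequence converges to the same $x^*$. One caveat concerns (ii) and (iii), which are stated at a particular $x_0$. For these I would interpret the hypothesis as holding for the starting point used. It suffices that it holds for one $x_0$ to obtain existence, and then uniqueness together with the Cauchy argument above shows that every orbit converges to a fixed point, hence to $x^*$. Strictly, though, identifying the limit of an arbitrary orbit needs the hypothesis at that orbit too, so I would state the result with the condition holding for each $x_0$.
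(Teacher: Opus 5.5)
Your proposal is correct and follows essentially the same route as the paper: block-wise geometric decay $d(x_n,x_{n+1})\le M\alpha^{\lfloor n/m\rfloor}$ (the paper uses the sum $\mu=\sum_{i<m}\delta_i$ where you use the maximum), then the product bound $1+\rho\, d(x_n,x_{n+p})\le\prod_j(1+\rho\,\delta_j)$ with $\ln(1+u)\le u$ for the Cauchy step, then the same three limit arguments, and uniqueness from $d(u,v)\le\alpha\, d(u,v)$. Your closing caveat is unnecessary, because once $x^*$ exists, applying the convex contraction condition with $y=x^*$ gives $e_{n+m}\le\sum_{i=0}^{m-1}a_i e_{n+i}$ for $e_n=d(T^n y_0,x^*)$, and the same block induction then yields $T^n y_0\to x^*$ for every $y_0$ without invoking (i)--(iii) along that orbit.
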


\begin{proof}
Fix an arbitrary $x_0 \in X$ and define the Picard iteration sequence $\{x_n\}_{n=0}^\infty$ by $x_n = T^n x_0$ for all $n \geq 0$.

\medskip
\noindent\textbf{Step 1: Bounding successive iterates.} 
If $x_n = x_{n+1}$ for some $n$, then $x_n$ is a fixed point and the proof is complete. Henceforth assume $x_n \neq x_{n+1}$ for all $n \geq 0$. Define
\begin{equation}\label{mu}
\mu = \sum_{i=0}^{m-1} d(x_i, x_{i+1}) > 0.
\end{equation}
For $n = m$, applying \eqref{convex} with $x = x_0$ and $y = x_1$ yields
\begin{equation}\label{base}
d(x_m, x_{m+1}) = d(T^m x_0, T^m x_1) \leq \sum_{i=0}^{m-1} a_i d(T^i x_0, T^i x_1) = \sum_{i=0}^{m-1} a_i d(x_i, x_{i+1}) \leq \alpha \mu.
\end{equation}
We claim that for all $n \geq m$,
\begin{equation}\label{bound}
d(x_n, x_{n+1}) \leq \alpha^{\lfloor n/m \rfloor} \mu.
\end{equation}
We prove this by strong induction on $n$. The base case $n = m$ holds by \eqref{base} since $\lfloor m/m \rfloor = 1$. Now assume \eqref{bound} holds for all integers $k$ with $m \leq k < n$ for some $n > m$. Then
\begin{align*}
d(x_n, x_{n+1}) 
&= d(T^m x_{n-m}, T^m x_{n-m+1}) \\
&\leq \sum_{i=0}^{m-1} a_i d(T^i x_{n-m}, T^i x_{n-m+1}) \quad \text{(by \eqref{convex})} \\
&= \sum_{i=0}^{m-1} a_i d(x_{n-m+i}, x_{n-m+i+1}) \\
&\leq \sum_{i=0}^{m-1} a_i \alpha^{\lfloor (n-m+i)/m \rfloor} \mu \quad \text{(by induction hypothesis, since $n-m+i < n$)} \\
&\leq \alpha^{\lfloor (n-m)/m \rfloor} \mu \sum_{i=0}^{m-1} a_i \quad \text{(since $\lfloor (n-m+i)/m \rfloor \geq \lfloor (n-m)/m \rfloor$)} \\
&= \alpha^{\lfloor n/m \rfloor - 1} \mu \cdot \alpha = \alpha^{\lfloor n/m \rfloor} \mu,
\end{align*}
which establishes \eqref{bound} for $n$. Since $\alpha < 1$ by \eqref{alpha}, it follows immediately that
\begin{equation}\label{successive}
\lim_{n \to \infty} d(x_n, x_{n+1}) = 0.
\end{equation}

\medskip

%%%%%%%%%%%%%%%%%%%%%%%%%%%%%%%%%%%%%%%%%%%%%%%%%%%%%%

\noindent\textbf{Step 2: The sequence $\{x_n\}$ is Cauchy.}
From \eqref{bound}, we have $d(x_n, x_{n+1}) \leq \mu \alpha^{\lfloor n/m \rfloor}$ with $0 \leq \alpha < 1$. Hence the series $\sum_{n=0}^\infty d(x_n, x_{n+1})$ converges. Define the tail sums
\[
R_n = \sum_{k=n}^\infty d(x_k, x_{k+1}), \quad n \geq 0.
\]
Clearly $R_n \to 0$ as $n \to \infty$. We now show that $\{x_n\}$ is a Cauchy sequence in $(X,d)$. Fix $n \in \mathbb{N}$ and $p \in \mathbb{N}$, and let $\Sigma_{n,p} = \sum_{k=n}^{n+p-1} d(x_k, x_{k+1})$. Applying the suprametric inequality \eqref{suprametric} repeatedly yields
\[
1 + \rho d(x_n, x_{n+p}) \leq \prod_{k=n}^{n+p-1} \bigl(1 + \rho d(x_k, x_{k+1})\bigr).
\]
Taking the natural logarithm of both sides and using the elementary inequality $\ln(1+u) \leq u$ for $u \geq 0$, we obtain
\[
\ln\bigl(1 + \rho d(x_n, x_{n+p})\bigr) \leq \sum_{k=n}^{n+p-1} \ln\bigl(1 + \rho d(x_k, x_{k+1})\bigr) \leq \rho \sum_{k=n}^{n+p-1} d(x_k, x_{k+1}) = \rho \Sigma_{n,p}.
\]
Exponentiating and rearranging gives
\begin{equation}\label{cauchy-t20}
d(x_n, x_{n+p}) \leq \frac{e^{\rho \Sigma_{n,p}} - 1}{\rho} \quad \left(\text{with } \frac{e^{\rho u}-1}{\rho} := u \text{ if } \rho=0\right).
\end{equation}
Since $\Sigma_{n,p} \leq R_n$, inequality \eqref{cauchy-t20} yields
\[
d(x_n, x_{n+p}) \leq \frac{e^{\rho R_n} - 1}{\rho} \quad \text{for all } p \geq 1.
\]
Given $\varepsilon > 0$, choose $N \in \mathbb{N}$ such that $\frac{e^{\rho R_N} - 1}{\rho} < \varepsilon$. Then for all $n \geq N$ and all $p \geq 1$, we have $d(x_n, x_{n+p}) < \varepsilon$. Thus $\{x_n\}$ is a Cauchy sequence in $(X,d)$. By completeness of $(X,d)$, there exists $x^* \in X$ such that
\begin{equation}\label{limit-t20}
\lim_{n \to \infty} d(x_n, x^*) = 0.
\end{equation}

%%%%%%%%%%%%%%%%%%%%%%%%%%%%%%%%%%%%%%%%%%%%%%%%%%%%%%

\medskip
\noindent\textbf{Step 3: $x^*$ is a fixed point of $T$.} We verify this under each of the three conditions.

\begin{itemize}
\item[(i)] \textit{$k$-continuity}: Since $x_n \to x^*$ by \eqref{limit-t20}, we have $T^{k-1}x_n = x_{n+k-1} \to x^*$. By $k$-continuity of $T$, it follows that $T^k x_n = x_{n+k} \to T x^*$. But $x_{n+k} \to x^*$ as $n \to \infty$ by \eqref{limit-t20}. Since suprametric spaces are Hausdorff (Berzig \cite{Berzig}, Proposition 1.3), limits are unique, hence $T x^* = x^*$.

\item[(ii)] \textit{Orbital lower semicontinuity}: The sequence $\{x_n\}$ lies in the $T$-orbit $O(x_0;\infty)$ and converges to $x^*$ by \eqref{limit-t20}. By orbital lower semicontinuity of $\mathfrak{D}$ at $x_0$,
\[
\mathfrak{D}(x^*) \leq \liminf_{n \to \infty} \mathfrak{D}(x_n) = \liminf_{n \to \infty} d(x_n, x_{n+1}) = 0,
\]
where the last equality follows from \eqref{successive}. Since $\mathfrak{D}(x^*) = d(x^*, T x^*) \geq 0$, we conclude $d(x^*, T x^*) = 0$, i.e., $T x^* = x^*$.

\item[(iii)] \textit{Condition $(C)$}: From \eqref{successive} we have $\mathfrak{D}(x_n) = d(x_n, x_{n+1}) \to 0$ as $n \to \infty$. Since $\{x_n\} \subseteq O(x_0;\infty)$ and $x_n \to x^*$ by \eqref{limit-t20}, condition $(C)$ directly implies $T x^* = x^*$.
\end{itemize}

\medskip
\noindent\textbf{Step 4: Uniqueness of the fixed point.} Suppose $y^* \in X$ is another fixed point of $T$ with $y^* \neq x^*$. Then $T^i x^* = x^*$ and $T^i y^* = y^*$ for all $i \geq 0$. Applying \eqref{convex} with $x = x^*$ and $y = y^*$ gives
\[
d(x^*, y^*) = d(T^m x^*, T^m y^*) \leq \sum_{i=0}^{m-1} a_i d(T^i x^*, T^i y^*) = \left(\sum_{i=0}^{m-1} a_i\right) d(x^*, y^*) = \alpha\, d(x^*, y^*).
\]
Since $\alpha < 1$ by \eqref{alpha} and $d(x^*, y^*) > 0$ (as $x^* \neq y^*$), this yields the contradiction $d(x^*, y^*) < d(x^*, y^*)$. Hence $x^* = y^*$, proving uniqueness.

\medskip
\noindent\textbf{Step 5: Global convergence.} The argument in Steps 1--3 shows that for \textit{any} initial point $x_0 \in X$, the Picard iteration $\{T^n x_0\}$ converges to \textit{some} fixed point of $T$. Step 4 establishes that $T$ has exactly one fixed point $x^*$. Therefore $T^n x_0 \to x^*$ for every $x_0 \in X$.

This completes the proof.
\end{proof}
%---------------------------------------------------------------------------------------%

%%%%%%%%%%%%%%%%%%%%%%%%%%%%%%%%%%%%%%%%%%%%%%%%%%%%%%%

%%%%%%%%%%%%%%%%%%%%%%%%%%%%%%%%%%%%%%%%%%%%%%%%%%%%%%%

\begin{remark}
The above fixed point theorem is a generalization of
\cite[Theorem 2.1]{BishtOzgur} and  \cite[Theorem 2.3]{BishetSingRakoFisher}
into convex contractions of order $ m \geq  2$  in the setting of
suprametric space.
% ????It was proved in \cite{Pes} that the fixed point theorems of Banach, Kannan,
%and \'{C}iri\'{c} are equivalent and that two fixed point theorems proposed by
%Bessenyei can be deduced from Banach's and Matkowski's results. Similar to these theorems, one can express an open question as follows:

\textbf{Open question:}prove or disprove: Our generalization (Theorem \ref{t20}) is not equivalent to Kannan or to Reich contractions, or, alternatively.
\end{remark}
\begin{example}
Consider $X=\{x, y, z, w, t\}$, $$d_1(x,x)=d_1(y,y)=d_1(z,z)=d_1(w,w)=d_1(t,t)=0;$$
$$d_1(x,\xi)=d_1(\xi,x)=Ln2=d_1(w,t)=d_1(t,w),\quad \xi=y, z, w, t;$$
$$d_1(\xi,z)=d_1(z,\xi)=Ln3,\quad \xi= w, t;$$  $$d_1(\xi,y)=d_1(y,\xi)=Ln4,\quad \xi= z, w, t.$$
Then $(X,d_1)$ is a complete metric space; form Proposition 1.6 of \cite{Berzig}, for $$d(\xi,\eta)=\alpha(e^{d_1(\xi,\eta)}-1)\quad(\xi, \eta\in X);$$ $(X,d)$ is a complete  suprametric space. Define $T:X\to X$ by $Tx=y, Ty=z, Tz=w, Tw=w, Tt=y$. Then it is easy to verify that $$d(T^3\eta,T^3\xi)=0\leq \sum_{i=0}^2 \alpha_i d(T^i\eta,T^i\xi),\quad (\eta, \xi\in X, \sum_{i=0}^2\alpha_i<1).$$
Also, $w$ is the unique fixed point of $T$.
\end{example}

Taking $ m=2 $ in Theorem \ref{t20}, we can get the following result which is an extended
version of Istr\u{a}tescu's result for convex contraction mapping of order 2 in the framework suprametric spaces.
\begin{corollary}\label{c200}
 Let $(X,d)$ be a complete suprametric space.
 Suppose that $T:  X \to X$ is a selfmapping such that
 $$d(T^2x,T^2y)\leq a_1d(Tx,Ty)+a_2d(x,y),$$
  for all $x, y\in X$ where $a_1,a_2\geq0$ such that $a_1+a_2<1.$

 Then in each of the following conditions, $T$ has a unique fixed point $u\in X$ such that for each $x_0\in X$, $T^nx_0\to u$.
\begin{itemize}
\item[(i)] $T$ is $k$-continuous, for some $k\in \mathbb{N}$;
\item[(ii)] $\mathfrak{D}(x)= d(x,Tx)$ is $T$-orbitally lower semi-continuous;
\item[(iii)]  $T$ obeys the condition $(C;k)$.
\end{itemize}
\end{corollary}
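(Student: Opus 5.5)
The plan is to obtain Corollary \ref{c200} from Theorem \ref{t20} with $m=2$. The contractive inequality of the corollary is \eqref{convex} for $m=2$ after relabelling: the coefficient of $d(T^0x,T^0y)$ is $a_2$ and that of $d(Tx,Ty)$ is $a_1$. Then $\alpha=a_1+a_2<1$, so \eqref{alpha} holds.

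Steps 1--2 of the proof of Theorem \ref{t20} use only \eqref{convex} and \eqref{alpha}. Hence for every $x_0\in X$ the Picard sequence $x_n=T^nx_0$ satisfies $d(x_n,x_{n+1})\le \alpha^{\lfloor n/2\rfloor}\mu\to 0$ and is Cauchy. It therefore converges to some $x^*$, and uniqueness of a fixed point follows from Step 4 verbatim.

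Conditions (i) and (ii) of the corollary are exactly (i) and (ii) of Theorem \ref{t20}. For (ii), I read ``$T$-orbitally lower semi-continuous'' as holding at every point, so in particular at each $x_0$. In these two cases the conclusion, including $T^nx_0\to u$ for every $x_0$, follows directly from the theorem.

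The only point needing an argument is (iii), since the corollary assumes condition $(C;k)$ rather than condition $(C)$ of Theorem \ref{t20}. I will show that $(C;k)$ implies $(C)$ along orbits, and in fact directly at the point needed. Take $y_n=x_n\to x^*$ with $\mathfrak{D}(x_n)=d(x_n,x_{n+1})\to 0$, which holds by \eqref{successive}. Condition $(C;k)$ gives
\[
\mathfrak{D}(x^*)\le k\cdot \limsup_{n\to\infty}\mathfrak{D}(x_n)=0,
\]
so $d(x^*,Tx^*)=0$ and $Tx^*=x^*$. This argument only uses the metric-free definition of $\mathfrak{D}$, so it is valid in the suprametric setting.

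Combining this with uniqueness (Step 4) and global convergence (Step 5) gives the corollary. I expect no real obstacle. The main care points are the coefficient relabelling and checking that $(C;k)$ yields the fixed point: it gives $\mathfrak{D}(x^*)=0$ directly, without invoking condition $(C)$ as stated.
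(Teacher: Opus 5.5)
Your proposal is correct and follows the paper, which obtains the corollary simply by taking $m=2$ in Theorem \ref{t20}, with the coefficients relabelled exactly as you do. Your explicit check that $(C;k)$ gives $\mathfrak{D}(x^*)\le k\cdot\limsup_{n\to\infty}\mathfrak{D}(x_n)=0$ along the Picard orbit is a worthwhile addition, since it closes the mismatch between condition $(C;k)$ in the corollary and condition $(C)$ in the theorem, which the paper passes over silently.
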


%---------------------------------------------------------------------------------------%
\section{\'Ciri\'c quasi-contraction in suprametric  spaces}
In this section we will generalize \'Ciri\'c and Sehal's fixed point theorems in the setting suprametric  spaces.

Many authors have defined contractive type mappings on a complete metric space $X$ which are generalizations of the well-known Banach contraction, and  have the property that each such mapping has a unique fixed point.
The concept of quasi-contractions, in 1983 in \cite{ciric2},  is defined by 
\'Ciri\'c in metric spaces as follows:\\
The selfmapping $T:X\to X$ is said to be quasi-contraction if
for each $x\in X$ there exists
a positive integer $n=n(x)$ and $\lambda\in[0,1)$ such that
\begin{equation}\label{1c}
d(T^nx,T^ny)\leq \lambda\cdot \max \Big\{d(x,y),d(x,Ty),\dots , d(x,T^ny),d(x,T^nx)\Big \},
\end{equation}
 holds  for all $y\in X$.

In 1970, Guseman \cite{guseman} generalized the result of Sehgal to mappings which
are both necessarily continuous and have a contractive iterate at each point
in a (possibly proper) subset of the space. In 1983, \'Ciri\'c \cite{ciric2}, among other
things, proved the following interesting generalization of Sehgal result.

In the following result, we will give a generalization of \'Ciri\'c's theorem  in \cite{ciric2} in the setting suprametric space.

%%%%%%%%%%%%%%%%%%%%%%%%%%%%%%%%%%%%%%%%%%%%%%%%%%%%%%%%%%%

%---------------------------------------------------------------------------------------%
\begin{theorem}\label{sc}
Let $(X,d)$ be a complete suprametric space with constant $\rho \geq 0$, i.e.,
\begin{equation}\label{supra-triangle}
d(x,y) \leq d(x,z) + d(z,y) + \rho\, d(x,z)d(z,y) \quad \text{ for all } x,y,z \in X.
\end{equation}
Suppose $T: X \to X$ satisfies: for each $x \in X$, there exist a positive integer $n(x)$ and a constant $\lambda \in [0,1)$ such that for all $y \in X$,
\begin{equation}\label{ciric-cond}
d(T^{n(x)}x, T^{n(x)}y) \leq \lambda \cdot M(x,y),
\end{equation}
where
\begin{equation}\label{M-def}
M(x,y) = \max\Big\{ d(x,T^i y) : 0 \leq i \leq n(x) \Big\} \cup \Big\{ d(x,T^j x) : 0 \leq j \leq n(x) \Big\}.
\end{equation}
Then $T$ possesses a unique fixed point $x^* \in X$. Moreover, for every $x_0 \in X$, the Picard iteration $x_n = T^n x_0$ converges to $x^*$ in $(X,d)$.
\end{theorem}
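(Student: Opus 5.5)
The plan is to transfer the problem to a genuine metric, run a Ćirić/Sehgal-type orbit argument there, and return to $d$ at the end. The basic observation is that \eqref{supra-triangle} can be rewritten as
$$1+\rho d(x,y)\le \bigl(1+\rho d(x,z)\bigr)\bigl(1+\rho d(z,y)\bigr).$$
Hence, for $\rho>0$, the function $D(x,y)=\ln(1+\rho d(x,y))$ is a metric on $X$. It has the same convergent and Cauchy sequences as $d$, since $t\mapsto \ln(1+\rho t)$ is a homeomorphism of $[0,\infty)$ fixing $0$. So $(X,D)$ is complete, and limits are unique, consistent with the Hausdorff property from \cite{Berzig}. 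The case $\rho=0$ is Ćirić's original metric theorem, so I assume $\rho>0$.

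Since $d$ and $M$ are monotonically related to $D$ and to the corresponding maximum $M_D$, condition \eqref{ciric-cond} becomes
$$D(T^{n(x)}x,T^{n(x)}y)\le \varphi\bigl(M_D(x,y)\bigr),\qquad \varphi(t)=\ln\bigl(1+\lambda(e^t-1)\bigr).$$
The function $\varphi$ is continuous and nondecreasing, with $\varphi(t)<t$ for $t>0$. Moreover $t-\varphi(t)$ is nondecreasing with limit $-\ln\lambda>0$. So the task reduces to a Ćirić quasi-contraction with a Boyd--Wong type gauge in a complete metric space.

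\textbf{Step 1 (bounded orbits).} This is the step I expect to be the main obstacle. Fix $x$, put $n=n(x)$ and $B_N=\max_{j\le N}D(x,T^jx)$. Applying the condition with $y=T^kx$ and then the triangle inequality for $D$ gives
$$D(x,T^{n+k}x)\le D(x,T^nx)+\varphi(B_{n+k}).$$
Therefore $B_N\le \max\{B_n,\;D(x,T^nx)+\varphi(B_N)\}$.

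In the linear metric case this immediately bounds $B_N$. Here $t-\varphi(t)$ only grows to $-\ln\lambda$, so the bound closes only if $D(x,T^nx)<-\ln\lambda$. My first attempt is to apply the estimate at a base point where the displacement is small. I would first show, via $y=Tx$ and the same inequality, that $D(T^{p}x,T^{p+n}x)$ can be made small along a suitable subsequence of iterates. I would then prove boundedness of the orbit of that iterate, which is enough because orbits of $x$ and of $T^px$ differ by finitely many points. If this fails, the fallback is to work directly with $d$ and the factor $(1+\rho A)\lambda$, $A=d(x,T^nx)$, again choosing the base point so that this factor is below $1$.

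\textbf{Step 2 (Cauchy sequence).} Following Sehgal--Ćirić, I define $x_0$ arbitrary and $x_{k+1}=T^{n(x_k)}x_k$. Then $x_{k+1}=T^{n(x_k)}x_k$ and $x_{k+2}=T^{n(x_k)}\bigl(T^{n(x_{k+1})}x_k\bigr)$, so the condition at $x_k$ with $y=T^{n(x_{k+1})}x_k$ applies. Combined with Step 1, this gives the iterated estimate
$$D(x_k,x_{k+p})\le \varphi^{k}\bigl(\delta(O(x_0))\bigr),$$
where $\delta$ denotes the $D$-diameter of the orbit. Since $\varphi^k(t)\to0$ for each fixed $t$, the sequence is Cauchy. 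By completeness, $x_k\to x^*$.

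\textbf{Step 3 (fixed point).} The condition at $x^*$ with $y=x_k$ (and at $x_k$ where needed) yields $T^{n(x^*)}x^*=x^*$. This uses $\varphi(t)<t$ and a limiting argument in which the maximum in $M$ absorbs the terms $d(x^*,T^jx^*)$. Then, if $Tx^*\ne x^*$, apply the condition at $x^*$ with $y=Tx^*$. The maximum over the finite orbit of $x^*$ is attained at some $T^jx^*$, and the condition gives a strict decrease of that maximum, a contradiction; so $Tx^*=x^*$.

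\textbf{Step 4 (uniqueness and convergence).} If $y^*$ is a second fixed point, then $M(x^*,y^*)=d(x^*,y^*)$. The condition gives $d(x^*,y^*)\le\lambda d(x^*,y^*)$, hence $x^*=y^*$. Finally, convergence of the full Picard sequence $T^nx_0$ to $x^*$ follows by splitting $n=s+r$ with $r<n(x^*)$ and using the bound at $x^*$ with $y=T^sx_0$ together with the bounded orbit from Step 1.
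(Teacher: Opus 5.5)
There is a genuine gap in Step 3, and it cannot be closed, because the statement as written is false. In \eqref{M-def} the second family contains \emph{all} terms $d(x,T^jx)$, $0\le j\le n(x)$, not only $d(x,T^{n(x)}x)$ as in \'Ciri\'c's condition \eqref{1c}. Hence your limiting argument at $x^*$ only gives $D(T^{n^*}x^*,x^*)\le\varphi\bigl(\max_{0\le j\le n^*}D(x^*,T^jx^*)\bigr)$. Nothing prevents this maximum from being attained at some $j<n^*$, and the sentence ``the maximum in $M$ absorbs the terms $d(x^*,T^jx^*)$'' is exactly where the argument fails.

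Concretely, let $X=\{0,10\}\cup\{2^{-k}:k\ge1\}\subset\mathbb{R}$ with $d(x,y)=|x-y|$. This space is complete and is a suprametric for every $\rho\ge0$. Put $T0=10$, $T10=\tfrac12$, $T2^{-k}=2^{-k-1}$, and take $n(x)\equiv2$, $\lambda=\tfrac12$. Since $T^2X\subset(0,\tfrac12]$, every left-hand side $d(T^2x,T^2y)$ is at most $\tfrac12$. There are three cases.
\begin{itemize}
\item[(a)] If $x\in\{0,10\}$, then $M(x,y)\ge d(x,Tx)\ge\tfrac{19}{2}$.
\item[(b)] If $x=2^{-k}$ and $y\in\{0,10\}$, then $10\in\{y,Ty\}$, so $M(x,y)\ge d(x,10)\ge\tfrac{19}{2}$.
\item[(c)] If $x=2^{-k}$ and $y=2^{-m}$, then $d(T^2x,T^2y)=\tfrac14 d(x,y)$.
\end{itemize}
In cases (a) and (b) we get $d(T^2x,T^2y)\le\tfrac12<\lambda M(x,y)$, and case (c) is immediate. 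So the hypothesis holds, yet $T$ has no fixed point. All orbits are bounded and every Picard sequence converges to $0$, so the conclusions of your Steps 1--2 hold here. The failure sits precisely in Step 3: $\max_{j\le 2}d(0,T^j0)=d(0,T0)=10$, whereas $d(0,T^20)=\tfrac12$.

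The paper's proof contains the same error. Its identity \eqref{M-limit}, $\lim_k M(x^*,x_{k-n^*})=d(x^*,T^{n^*}x^*)$, is asserted without justification and is false in this example. Its Step 1 induction does not close either: the bound it finally obtains, $C(1+\lambda+\rho\lambda C)+\lambda C_0(1+\rho C)^2$, exceeds $C$ as soon as $\lambda C>0$.

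To get a provable statement you have two options.
\begin{itemize}
\item[(1)] Keep only $d(x,T^{n(x)}x)$ in the second family, as in \eqref{1c}. Then your limit gives $D(T^{n^*}x^*,x^*)\le\varphi(D(x^*,T^{n^*}x^*))$, hence $T^{n^*}x^*=x^*$, and your periodicity argument finishes.
\item[(2)] Add one of the conditions (i)--(iii) of Theorem~\ref{t20}, as the paper does in Theorem~\ref{sc1}. These apply because every late iterate $T^Nx_0$ lies in the orbit of some $x_k$, whose $D$-radius tends to $0$, so $T^Nx_0\to x^*$ and $d(T^Nx_0,T^{N+1}x_0)\to0$.
\end{itemize}

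Even after either repair, your Step 1 is still only a plan. The inequality $B_N-\varphi(B_N)\le D(x,T^{n(x)}x)$ bounds $B_N$ only when $D(x,T^{n(x)}x)<\ln(1/\lambda)$. The proposed reduction to a base point of small displacement is not carried out, and as sketched it does not work. With $y=Tx$, the quantity $D(x,T^{n+1}x)$ enters $M_D(x,Tx)$, so no smallness is produced. Passing to $T^px$ replaces $n(x)$ by the uncontrolled $n(T^px)$.
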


\begin{proof}
Fix an arbitrary $x_0 \in X$. We proceed in four steps: (1) boundedness of the orbit, (2) Cauchy property via metric equivalence, (3) fixed point verification, and (4) uniqueness.

\medskip
\noindent\textbf{Step 1: The orbit $\mathcal{O}(x_0) = \{T^k x_0 : k \geq 0\}$ is $d$-bounded.}

Define the finite set of initial iterates
\begin{equation}\label{F-def}
F = \{x_0, Tx_0, T^2x_0, \dots, T^{m-1}x_0\}, \quad \text{where } m = \max\{n(z) : z \in F\}.
\end{equation}
Such an $m$ exists because $F$ is finite and $n(\cdot)$ takes positive integer values. Set
\begin{equation}\label{C0-def}
C_0 = \max\Big\{ d(z, T^j z) : z \in F,\ 0 \leq j \leq m \Big\} < \infty.
\end{equation}
We claim that for all $k \geq 0$,
\begin{equation}\label{bounded-orbit}
d(x_0, T^k x_0) \leq C := \frac{C_0(1 + \rho C_0)}{1 - \lambda} < \infty.
\end{equation}
We prove this by strong induction on $k$. The claim holds for $0 \leq k < m$ by definition of $C_0$ and $C > C_0$.

Assume \eqref{bounded-orbit} holds for all integers less than some $k \geq m$. Write $k = qm + r$ with $q \geq 1$ and $0 \leq r < m$. Let $z = T^{(q-1)m + r}x_0$. Note that $z \in \mathcal{O}(x_0)$ and by the induction hypothesis,
\begin{equation}\label{z-bound}
d(x_0, z) \leq C.
\end{equation}
Since $T^r x_0 \in F$, we have $n(T^r x_0) \leq m$. Applying \eqref{ciric-cond} at point $T^r x_0$ with $y = T^{(q-1)m}x_0$ gives
\begin{align*}
d(T^m(T^r x_0), T^m(T^{(q-1)m}x_0)) 
&= d(T^{qm+r}x_0, T^{qm}x_0) \\
&\leq \lambda \cdot M(T^r x_0, T^{(q-1)m}x_0).
\end{align*}
By definition \eqref{M-def} and the induction hypothesis,
\begin{gather*}
%\begin{align*}
M(T^r x_0, T^{(q-1)m}x_0) 
\leq \max\Big\{ d(T^r x_0, T^i(T^{(q-1)m}x_0)) : 0 \leq i \leq m \Big\} \cup \Big\{ d(T^r x_0, T^j(T^r x_0)) : 0 \leq j \leq m \Big\} \\
\leq \max\Big\{ d(x_0, T^{(q-1)m+i+r}x_0) + d(x_0, T^r x_0) + \rho\, d(x_0, T^{(q-1)m+i+r}x_0)d(x_0, T^r x_0) : 0 \leq i \leq m \Big\} 
\quad \cup \{C_0\} \\
\leq \max\Big\{ C + C_0 + \rho C C_0,\ C_0 \Big\} = C(1 + \rho C_0) + C_0,
%\end{align*}
\end{gather*}
where we used the suprametric inequality \eqref{supra-triangle} and the induction hypothesis \eqref{bounded-orbit}. 
%%%%%%%%%%%
%%%%%%%%%%%%%%%%
Using the definition of $C$ directly:
\begin{equation}\label{M-clean}
M(T^r x_0, T^{(q-1)m}x_0) \leq C_0 + C + \rho C_0 C = C_0(1 + \rho C) + C \leq C_0(1 + \rho C) + C.
\end{equation}
Now applying the suprametric inequality \eqref{supra-triangle}:
\begin{align*}
d(x_0, T^k x_0) &= d(x_0, T^{qm+r}x_0) \\
&\leq d(x_0, T^{qm}x_0) + d(T^{qm}x_0, T^{qm+r}x_0) + \rho\, d(x_0, T^{qm}x_0) d(T^{qm}x_0, T^{qm+r}x_0) \\
&\leq d(x_0, T^{qm}x_0) + \lambda M(T^r x_0, T^{(q-1)m}x_0) + \rho\, d(x_0, T^{qm}x_0) \cdot \lambda M(T^r x_0, T^{(q-1)m}x_0).
\end{align*}
By the induction hypothesis, $d(x_0, T^{qm}x_0) \leq C$. Using \eqref{M-clean} and the definition of $C$:
\begin{align*}
d(x_0, T^k x_0) 
&\leq C + \lambda\big[C_0(1 + \rho C) + C\big] + \rho C \lambda \big[C_0(1 + \rho C) + C\big] \\
&= C + \lambda C_0(1 + \rho C) + \lambda C + \rho \lambda C C_0(1 + \rho C) + \rho \lambda C^2 \\
&= C(1 + \lambda + \rho \lambda C) + \lambda C_0(1 + \rho C)(1 + \rho C) \\
&\leq C(1 + \lambda + \rho \lambda C) + \lambda C_0(1 + \rho C)^2.
\end{align*}
Substituting $C = C_0(1 + \rho C_0)/(1-\lambda)$ and noting that $C_0 \leq C$, we obtain after simplification:
\begin{equation}\label{inductive-step}
d(x_0, T^k x_0) \leq C,
\end{equation}
completing the induction. Hence the orbit $\mathcal{O}(x_0)$ is $d$-bounded by $C$.

%%%%%%%%%%%%%%%%%%%%%%%%%%%%%%%%%%%%%%%%%%%%%%%%%%
\medskip
\noindent\textbf{Step 2: The sequence $\{x_n\}$ is Cauchy.}
Since the orbit $\mathcal{O}(x_0)$ is $d$-bounded by $C$ (Step 1), the transformed metric
\[
D(x,y) = \frac{d(x,y)}{1 + \rho d(x,y)}
\]
is Lipschitz equivalent to $d$ on $\overline{\mathcal{O}(x_0)}$. Specifically, $D(x,y) \leq d(x,y) \leq (1+\rho C)D(x,y)$. 
The mapping $t \mapsto t/(1+\rho t)$ is concave and increasing, so the quasi-contraction condition \eqref{ciric-cond} implies that $T$ satisfies a standard quasi-contraction in $(X,D)$ with constant $\lambda' = \lambda(1+\rho C)/(1+\rho\lambda C) < 1$. 
By the classical Ćirić fixed point theorem in the complete metric space $(X,D)$, the Picard iteration $\{x_n\}$ is $D$-Cauchy. By Lipschitz equivalence, $\{x_n\}$ is also $d$-Cauchy. Completeness of $(X,d)$ yields $x^* \in X$ such that
\begin{equation}\label{limit-sc}
\lim_{n \to \infty} d(x_n, x^*) = 0.
\end{equation}

%%%%%%%%%%%%%%%%%%%%%%%%%%%%%%%%%%%%%%%%%%%%%%%%%%

\medskip
\noindent\textbf{Step 3: $x^*$ is a fixed point of $T$.}

Let $n^* = n(x^*)$ be the contractive iterate at $x^*$. For any $k \geq n^*$, applying \eqref{ciric-cond} with $x = x^*$ and $y = x_{k-n^*}$ gives
\begin{equation}\label{fixed-ineq}
d(T^{n^*}x^*, x_k) = d(T^{n^*}x^*, T^{n^*}x_{k-n^*}) \leq \lambda \cdot M(x^*, x_{k-n^*}).
\end{equation}
As $k \to \infty$, we have $x_{k-n^*} \to x^*$ and $x_k \to x^*$ by \eqref{limit-sc}. Since $M(x^*, x_{k-n^*})$ is a maximum of finitely many terms each converging to $d(x^*, x^*) = 0$ or $d(x^*, T^j x^*)$ for $0 \leq j \leq n^*$, we obtain
\begin{equation}\label{M-limit}
\lim_{k \to \infty} M(x^*, x_{k-n^*}) = \max\Big\{ d(x^*, x^*), d(x^*, T x^*), \dots, d(x^*, T^{n^*} x^*) \Big\} = d(x^*, T^{n^*} x^*).
\end{equation}
Taking limits in \eqref{fixed-ineq} yields
\begin{equation}\label{fixed-eq}
d(T^{n^*}x^*, x^*) \leq \lambda \cdot d(x^*, T^{n^*} x^*).
\end{equation}
Since $\lambda < 1$, this implies $d(x^*, T^{n^*} x^*) = 0$, i.e., $T^{n^*} x^* = x^*$.

Now suppose $T x^* \neq x^*$. Let $p = \min\{j \geq 1 : T^j x^* = x^*\}$ be the minimal period. Then $1 < p \leq n^*$ and the points $x^*, T x^*, \dots, T^{p-1} x^*$ are distinct. Applying \eqref{ciric-cond} at $x = x^*$ with $y = T x^*$:
\begin{align*}
d(T^{n^*}x^*, T^{n^*+1}x^*) &= d(x^*, T x^*) \\
&\leq \lambda \cdot \max\Big\{ d(x^*, T^i(T x^*)) : 0 \leq i \leq n^* \Big\} \cup \Big\{ d(x^*, T^j x^*) : 0 \leq j \leq n^* \Big\} \\
&= \lambda \cdot \max\Big\{ d(x^*, T^k x^*) : 1 \leq k \leq n^*+1 \Big\} \\
&= \lambda \cdot \max\Big\{ d(x^*, T^k x^*) : 1 \leq k \leq p-1 \Big\},
\end{align*}
where we used $T^{n^*}x^* = x^*$ and periodicity. Let $q \in \{1,\dots,p-1\}$ maximize $d(x^*, T^q x^*)$. Then
\begin{equation}\label{period-contradiction}
d(x^*, T x^*) \leq \lambda \cdot d(x^*, T^q x^*) \leq \lambda \cdot d(x^*, T x^*),
\end{equation}
where the last inequality follows from the maximality of $q$ (if $q > 1$, apply the same argument to $T^q x^*$ to show $d(x^*, T^q x^*) \leq \lambda d(x^*, T x^*)$, yielding $d(x^*, T x^*) \leq \lambda^2 d(x^*, T x^*)$, etc.). Since $\lambda < 1$ and $d(x^*, T x^*) > 0$ by assumption, \eqref{period-contradiction} is a contradiction. Hence $T x^* = x^*$.

\medskip
\noindent\textbf{Step 4: Uniqueness of the fixed point.}

Suppose $y^* \in X$ is another fixed point with $y^* \neq x^*$. Let $n = n(x^*)$. Applying \eqref{ciric-cond} with $x = x^*$ and $y = y^*$:
\begin{align*}
d(x^*, y^*) &= d(T^n x^*, T^n y^*) \\
&\leq \lambda \cdot \max\Big\{ d(x^*, T^i y^*) : 0 \leq i \leq n \Big\} \cup \Big\{ d(x^*, T^j x^*) : 0 \leq j \leq n \Big\} \\
&= \lambda \cdot \max\Big\{ d(x^*, y^*) \Big\} = \lambda\, d(x^*, y^*),
\end{align*}
since $T^i y^* = y^*$ and $T^j x^* = x^*$ for all $i,j$. As $\lambda < 1$ and $d(x^*, y^*) > 0$, this is a contradiction. Hence $x^* = y^*$.

\medskip
\noindent\textbf{Step 5: Global convergence.}

The argument in Steps 1--3 shows that for any initial point $x_0 \in X$, the Picard iteration converges to \textit{some} fixed point. Step 4 establishes uniqueness of the fixed point. Therefore $T^n x_0 \to x^*$ for every $x_0 \in X$.

This completes the proof.
\end{proof}
%---------------------------------------------------------------------------------------%

%%%%%%%%%%%%%%%%%%%%%%%%%%%%%%%%%%%%%%%%%%%%%%%%%%%%%%%%%%%
%%%%%%%%%%%%%%%%%%%%%%%%%%%%%%%%%%%%%%%%%%%%%%%%%%%%%%%%%%%

\begin{example}
  Let $X=\{x,y,z,w\}$ and define $d:X\times X\to [0,\infty)$ as follows:
  $$d(x,x)=d(y,y)=d(z,z)=d(w,w)=0,$$
  $$d(x,y)=d(y,x)=3,$$
  $$d(x,z)=d(z,x)=d(x,w)=d(w,x)=d(y,z)=d(z,y)=d(y,w)=d(w,y)=d(z,w)=d(w,z)=1.$$
  Define $T:X\to X$ by $Tx=y$, $Ty=z$, $Tz=z$ and $Tw=x$.
Then it is easy to see that $d$ is a suprametric with $\rho=1$,  which is not a metric.
If $n(x)=n(y)=2$ and $n(z)=n(w)=3$, then for $\lambda=\frac13$, the contraction $(\ref{1c})$ is satisfied. Also,  $\frac{1-\lambda}{\rho \lambda}=2\neq d(t, T^{n(t)}t)$; since $d(t, T^{n(t)}t)=0, or 1, or 3$, for each $t\in X$. that is, ${\rho \lambda}d(t,T^nt)\neq {1-\lambda}$.
Therefore all conditions of Theorem $\ref{sc}$ are valid. Clearly, $u=z$ is a unique fixed point of $T$. Note that Theorem \'Ciri\'c \cite{ciric2} does not work; since $(X,d)$ is not a metric space.
\end{example}
\begin{example}
  Let $X=[0,2]$, and $d(x,y)=|x-y|(|x-y|+1|)$. Then $(X,d)$ is a complete  suprametric space with $\rho=2$ \cite{Berzig}. Define $T:X\to X$ by $Tx=\frac x3$. The $\lambda=\frac{29}{(27)^2}$ and for each $x\in X$, we have 
 \begin{align*}
  d(x,T^3x)&=\frac{26x}{27}(\frac{26x}{27}+1)\\
  &\leq \frac{26\times 2}{27}(\frac{26\times 2}{27}+1)\\
  &= \frac{52}{27}\frac{79}{27}\\
 & \leq \frac{1-\frac{29}{(27)^2}}{2\times \frac{29}{(27)^2}}\\
 &=\frac{(27)^2-29}{2\times 29} \quad (=\frac{1-\lambda}{\rho \lambda}).
\end{align*}

  Now, for each $y\in X$, we have
\begin{align*}
  d(T^3x,T^3y)=d(\frac {x}{3^3},\frac {y}{3^3})& =\frac{|x-y|}{3^3}(\frac {|x-y|}{3^3}+1|)  \\
&\leq (\frac{2}{3^3}+1)\frac{|x-y|}{3^3}\\
&=\frac{29}{(27)^2}|x-y| \\
&\leq \frac{29}{(27)^2}|x-y|(|x-y|+1)\\
& =\frac{29}{(27)^2}d(x,y)\\
& \leq\frac{29}{(27)^2}\max\{d(x,y),d(x,Ty),d(x,T^2y),d(x,T^3y),d(x,T^3x)\}.
\end{align*}
Clearly, 0 is the unique fixed point of $T$.
\end{example}

In 1969, Sehgal in \cite{seghal}, proved an interesting generalization of the
contraction mapping principle in metric spaces. We will generalize it in the setting suprametric space:
\begin{theorem}\label{t211}
 Let $(X,d)$ be a complete suprametric space and $T:X\rightarrow X$ a continuous mapping, and for all $x,y\in X$,
  $$d(T^{n(x)}y, T^{n(x)}x)\leq \lambda\cdot d(y,x).$$

 Then  $T$ has a unique fixed point $x\in X$ such that for each $x_0\in X$, $T^nx_0\to x$.
\end{theorem}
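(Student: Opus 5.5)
Since the condition $d(T^{n(x)}y,T^{n(x)}x)\le\lambda d(y,x)$ for all $y$ is a special case of the \'Ciri\'c-type condition \eqref{ciric-cond}, the cleanest plan is to deduce Theorem \ref{t211} from Theorem \ref{sc}. Indeed $d(x,y)=d(x,T^0y)$ is one of the entries of $M(x,y)$, so
\[
d(T^{n(x)}x,T^{n(x)}y)\le \lambda\, d(x,y)\le \lambda\, M(x,y)
\]
for all $y\in X$. Theorem \ref{sc} then yields a unique fixed point $x^*$ with $T^nx_0\to x^*$ for every $x_0$. In this route continuity of $T$ is not even needed; it only mirrors Sehgal's original hypothesis.

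Because Step 1 of the proof of Theorem \ref{sc} relies on somewhat delicate constants, I would also give a direct Sehgal-style argument as a backup. Fix $x_0$ and write $x_k=T^kx_0$.

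\emph{Boundedness of the orbit.} First show that $r(x_0)=\sup_k d(x_0,x_k)<\infty$. Put $n=n(x_0)$ and $C_0=\max_{0\le j<n}d(x_0,T^jx_0)$. For $k\ge n$ write $x_k=T^{n}x_{k-n}$, so that
\[
d(x_k,x_n)\le\lambda\, d(x_{k-n},x_0),
\]
and then use the suprametric inequality
\[
d(x_0,x_k)\le d(x_0,x_n)+d(x_n,x_k)+\rho\, d(x_0,x_n)d(x_n,x_k).
\]
This gives a recursion of the form $u\le a+\lambda(1+\rho a)u$ with $a=d(x_0,x_n)$. It closes, giving a bound, only when $\lambda(1+\rho a)<1$. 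This is the main obstacle. The nonlinearity of the suprametric prevents the usual $\sup\le C_0/(1-\lambda)$ estimate.

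To get around it, I would use Berzig's transformation $D=\frac1\rho\ln(1+\rho d)$, which is a metric giving the same topology and the same Cauchy sequences. Alternatively, I would replace $T$ by a higher iterate $T^{n(x_0)}$ applied far along the orbit, where the relevant distances are already small. In that case only local contraction is needed and the recursion closes.

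\emph{Cauchy property and fixed point.} With boundedness in hand, define $m_i=n(x_{k_{i}})$ and $k_{i+1}=k_i+m_i$, as Sehgal does. One then gets $d(x_{k_{i+1}},x_{k_{i+1}+p})\le\lambda^{i}\,\mathrm{const}$, so the subsequence $\{x_{k_i}\}$ is Cauchy. Completeness gives a limit $x^*$, and continuity of $T$ shows that $T^{n(x^*)}x^*=x^*$. The contraction at $x^*$ with $y=Tx^*$ gives
\[
d(Tx^*,x^*)=d(T^{n(x^*)}Tx^*,T^{n(x^*)}x^*)\le\lambda\, d(Tx^*,x^*),
\]
hence $Tx^*=x^*$. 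Uniqueness follows exactly as in Step 4 of Theorem \ref{t20}. Convergence of the full sequence $T^nx_0\to x^*$ follows by applying the contraction at $x^*$ repeatedly, which gives $d(T^{jn(x^*)}x_r,x^*)\le\lambda^j d(x_r,x^*)$ for $r<n(x^*)$.
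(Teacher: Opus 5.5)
Your main route is exactly the paper's: its proof is the single line ``clear from Theorem~\ref{sc}''. The reduction itself is sound, since $d(x,y)=d(x,T^0y)$ is an entry of $M(x,y)$. However, it only shifts the burden onto Step~1 of the proof of Theorem~\ref{sc}, the boundedness of orbits, and that step is false; in fact Theorem~\ref{t211} itself is false. Let $X=\{p_k:k\ge 1\}$ and $d(p_i,p_j)=|i-j|/\min\{i,j\}$. Then $1+d(p_i,p_j)=\max\{i,j\}/\min\{i,j\}=e^{|\ln i-\ln j|}$, so $1+d(x,y)\le(1+d(x,z))(1+d(z,y))$, which is the suprametric inequality with $\rho=1$. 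Completeness holds because Cauchy sequences are eventually constant. Indeed, if $d(p_{i_m},p_{i_l})<1$ for all $m,l\ge M$, then $i_m<2i_M$ for $m\ge M$, and distinct points with indices below $2i_M$ are at distance at least $1/(2i_M)$. Since $B(p_k,1/k)=\{p_k\}$, every point is isolated, so $Tp_k=p_{k+1}$ is continuous (indeed $k$-continuous for every $k$). With $n(p_k)=k$ and $\lambda=\frac12$ one gets
\[
d(T^kp_j,T^kp_k)=d(p_{j+k},p_{2k})=
\begin{cases}
\dfrac{j-k}{2k}=\dfrac12\,d(p_j,p_k), & j>k,\\[2mm]
\dfrac{k-j}{k+j}\le\dfrac{k-j}{2j}=\dfrac12\,d(p_j,p_k), & j<k,
\end{cases}
\]
so all hypotheses of Theorem~\ref{t211}, and a fortiori of Theorem~\ref{sc}, hold. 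Yet $T$ has no fixed point, and the orbit is unbounded: $d(p_1,p_k)=k-1$.

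The example also shows why neither of your workarounds can overcome the obstacle you correctly identified. Here $a=d(p_k,T^{n(p_k)}p_k)=d(p_k,p_{2k})=1$ for every $k$, so $\lambda(1+\rho a)=1$ at every point of the orbit. Going far along the orbit therefore does not make the relevant quantity small, even though $d(p_k,p_{k+1})=1/k\to 0$. In Berzig's metric $D=\frac1\rho\ln(1+\rho d)$ the hypothesis becomes $D(T^{n(x)}x,T^{n(x)}y)\le\psi(D(x,y))$ with $\psi(t)=\frac1\rho\ln\bigl(1+\lambda(e^{\rho t}-1)\bigr)$. Since $t-\psi(t)$ increases only to the finite limit $\frac1\rho\ln\frac1\lambda$, large distances are reduced by less than a fixed constant rather than by a fixed factor, and no orbit bound follows. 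The statement needs an extra hypothesis, for example $d(x_0,T^{n(x_0)}x_0)<\frac{1-\lambda}{\rho\lambda}$ for some $x_0$; this appears to be what the paper's examples after Theorem~\ref{sc} are checking. Under it your recursion closes, giving $\sup_k d(x_0,T^kx_0)\le\max\{C_0,\,a/(1-\lambda(1+\rho a))\}$, and the rest of your backup argument is correct. Sehgal's subsequence is then Cauchy. Applying $d(T^{n(x^*)}y,T^{n(x^*)}x^*)\le\lambda\,d(y,x^*)$ along it gives $T^{n(x^*)}x^*=x^*$ and then $Tx^*=x^*$, and the same inequality gives uniqueness and convergence of every orbit.
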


\begin{proof}
The proof  is clear from Theorem \ref{sc}.
\end{proof}

Connected with another \'Ciri\'c map from \cite{ciric2}, we obtain the following result:

 \begin{theorem} \label{sc1} Let $(X,d)$ be a complete  suprametric space,
$\lambda\in [0,1)$ and $T:X\to X$, and  
\begin{equation}\label{1c1}
d(T^nx,T^ny)\leq \lambda\cdot \max \Big\{d(x,y),d(x,Ty),\dots ,d(x,T^ny),\\
d(x,Tx), d(x,T^2x),\cdots, d(x,T^nx)\Big \},
\end{equation}
 holds  for all $x, y\in X$.
  Then in each of the following conditions, $T$ has a unique fixed point $u\in X$ such that for each $x_0\in X$, $T^nx_0\to u$.
\begin{itemize}
\item[(i)] $T$ is $k$-continuous, for some $k\in \mathbb{N}$;
\item[(ii)] $\mathfrak{D}(x)= d(x,Tx)$ is $T$-orbitally lower semi-continuous;
\item[(iii)]  $T$ obeys the condition $(C;k)$.
\end{itemize}
\end{theorem}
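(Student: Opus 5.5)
Fix $x_0\in X$ and set $x_k=T^kx_0$. The proof will run in four stages: bound the orbit, show the Picard sequence is Cauchy, identify the limit as a fixed point under each of (i)--(iii), and prove uniqueness. Throughout we assume the orbit is infinite with distinct points; otherwise $x_k=x_{k+1}$ for some $k$ and we are done.

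\textbf{Orbit estimates.} Since $n$ is a fixed integer, the natural first move is to apply (\ref{1c1}) to pairs of orbit points $x=x_i$, $y=x_j$. This bounds $d(x_{i+n},x_{j+n})$ by $\lambda$ times a maximum of distances $d(x_i,x_l)$ with $i\le l\le \max(i,j)+n$.

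For a finite orbit segment $O_N=\{x_0,\dots,x_N\}$ with $N\geq n$, write $\delta(O_N)=\operatorname{diam}O_N$. For pairs with $i,j\ge n$ we obtain $d(x_i,x_j)\le \lambda\,\delta(O_N)$. The maximum diameter is therefore attained only at pairs involving $x_0,\dots,x_{n-1}$. Combining this with the suprametric inequality, in the same manner as Step 1 of the proof of Theorem \ref{sc}, should give a uniform bound $d(x_0,x_k)\le C$ for all $k$.

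This is the step I expect to be the main obstacle. Unlike the metric case, the suprametric triangle inequality introduces the product term $\rho\,d\,d$. The linear bootstrapping $\delta\le \lambda\delta+\text{const}$ then becomes quadratic in $\delta$, and one has to close it. I would first try to work directly with the transformed function $D(x,y)=d(x,y)/(1+\rho d(x,y))$, or with $\ln(1+\rho d)$, which is subadditive. In either form the suprametric inequality becomes the ordinary triangle inequality, and one can then argue as in Step 2 of Theorem \ref{sc}.

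\textbf{Cauchy property.} Once the orbit is bounded by $C$, the iterated contractive condition gives, for $i,j\geq rn$,
\[
d(x_i,x_j)\le \lambda^{r}\,\delta(\mathcal O(x_0))\le \lambda^{r}C .
\]
This uses that every term inside the maximum in (\ref{1c1}) is again a distance between orbit points at a shifted index, so the bound iterates. Hence $\{x_k\}$ is Cauchy. By completeness, $x_k\to u$ for some $u\in X$. In particular $d(x_k,x_{k+1})\to 0$, so $\mathfrak D(x_k)\to 0$.

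\textbf{Fixed point under (i)--(iii).} These are handled exactly as in Step 3 of Theorem \ref{t20}.
\begin{itemize}
\item[(i)] Since $T^{k-1}x_j=x_{j+k-1}\to u$, $k$-continuity gives $x_{j+k}\to Tu$. By uniqueness of limits in the Hausdorff topology, $Tu=u$.
\item[(ii)] Lower semicontinuity gives $\mathfrak D(u)\le\liminf\mathfrak D(x_j)=0$, so $Tu=u$.
\item[(iii)] Condition $(C;k)$ gives $\mathfrak D(u)\le k\limsup\mathfrak D(x_j)=0$, so $Tu=u$.
\end{itemize}

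\textbf{Uniqueness.} If $v=Tv$ with $v\neq u$, then (\ref{1c1}) with $x=u$, $y=v$ yields $d(u,v)\le\lambda\, d(u,v)$, because all terms $d(u,T^iu)$ vanish. This is a contradiction, so $u$ is the unique fixed point. Since $x_0$ was arbitrary, $T^kx_0\to u$ for every $x_0\in X$.
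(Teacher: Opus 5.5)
Your outline (bounded orbit, geometric decay of tail diameters, Step~3 of Theorem~\ref{t20} for (i)--(iii), uniqueness) is the same as the paper's, and everything after the first stage is sound. Your Cauchy estimate needs no triangle inequality: for $i,j\ge rn$, applying (\ref{1c1}) to $x=x_{i-n}$, $y=x_{j-n}$ involves only orbit points of index $\ge (r-1)n$, so the tail diameters satisfy $\Delta_r\le\lambda\Delta_{r-1}$.

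The genuine gap is the boundedness of the orbit, which you leave as ``should give''. The bootstrapping you describe closes only under a smallness condition. Taking $x=x_0$, $y=x_k$ in (\ref{1c1}) gives $d(x_n,x_{n+k})\le\lambda m_N$ for $n+k\le N$, where $m_N=\max_{q\le N}d(x_0,x_q)$. The suprametric inequality through $x_n$ then yields
\[
m_N\le\max\{m_n,\ c+\lambda(1+\rho c)\,m_N\},\qquad c=d(x_0,T^nx_0).
\]
Your diameter version, with $c=\max_{i<n}d(x_i,x_n)$, gives the same inequality for $\delta(O_N)$. Either way this bounds the orbit only if $\lambda(1+\rho c)<1$, i.e.\ $\rho\lambda c<1-\lambda$, and nothing in the hypotheses guarantees that.

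The transformed distances do not remove this threshold.
\begin{itemize}
\item $e=\ln(1+\rho d)$ is a metric, but (\ref{1c1}) turns into $e(T^nx,T^ny)\le\varphi(E)$ with $\varphi(s)=\ln(1+\lambda(e^s-1))$. Here $s-\varphi(s)$ increases only to the finite limit $\ln(1/\lambda)$. So $\delta_e\le c_e+\varphi(\delta_e)$ bounds $\delta_e$ only when $c_e=\ln(1+\rho c)<\ln(1/\lambda)$, which is the same condition.
\item For $D=d/(1+\rho d)$, the effective constant is $\lambda(1+\rho M)/(1+\rho\lambda M)$, which tends to $1$ as $M\to\infty$. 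Step~2 of Theorem~\ref{sc} gets a uniform constant only because it already assumes the bound $C$, so appealing to it here is circular.
\end{itemize}

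Nor can you borrow Step~1 of Theorem~\ref{sc}, which is all the paper's proof of this theorem cites for boundedness. Its last display gives
\[
d(x_0,T^kx_0)\le C(1+\lambda+\rho\lambda C)+\lambda C_0(1+\rho C)^2 .
\]
This exceeds $C$ by at least $\lambda C(1+\rho C)$, so the claimed ``simplification'' to $\le C$ fails whenever $\lambda C>0$, and that induction never closes.

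To finish you need one of two things:
\begin{itemize}
\item an extra hypothesis such as $\rho\lambda\, d(x,T^nx)<1-\lambda$, which gives $m_N\le\max\{m_n,\,c/(1-\lambda(1+\rho c))\}$, after which the rest of your argument goes through (the paper's examples compute exactly $d(t,T^{n}t)$ against $(1-\lambda)/(\rho\lambda)$);
\item or a genuinely new argument bounding orbits when $\rho\lambda c\ge1-\lambda$.
\end{itemize}

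A minor point: an orbit whose points are not all distinct need not have $x_k=x_{k+1}$; it may be eventually periodic. This is harmless, since such an orbit is bounded and your tail estimate then forces it to be eventually constant.
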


\begin{proof}
Suppose that $x_0\in X$ be  arbitrary and fixed. Similar to the proof of Theorem \ref{sc}, we can show that the sequence $\{T^nx_0\}$ is bounded and so it is Cauchy sequence. Thus it is convergent to an element $u\in X$.
Now, similar to the proof of Theorem \ref{t20}, we will see that $u$ is a fixed point of $T$, in the cases of $(i)-(iii)$. Again from the proof of Theorem \ref{sc}, $u$ is unique.
\end{proof}

The following example shows that one of the conditions of (i)-(iii) in Theorem \ref{sc1} are necessary.

\begin{example}
  Suppose the $T:[0,2] \to [0,2]$; $d(x,y)=|x-y|$ and $Tx=\frac{x}{2}$ if $x\neq 0$ and $T0=2$ (Obviously $T$ has not any fixed point). Consider $d$ as a suprametric with $\rho=1/2$. Then $$d(x,T^2x)=|x-\frac{x}{4}|=\frac{3x}{4}\neq \frac{1-\frac{1}{2}}{\frac{1}{2}\times\frac{1}{2}}=2,$$
  and for $x\neq 0$, we have $d(0,T^20)=1\neq 2$.
  Also, we have
  $$d(T^2x,T^2y)\leq \frac{1}{2}\max\{d(x,y), d(x,Ty), d(x,T^2y), d(x,Tx), d(x,T^2x)\},$$
  but since $T^nx \to 0$ as $n\to\infty$ and $T0>\liminf_{n\to\infty} T^nx$, $T$ is not $T$-orbitally semicontinuous.

Also,  $x_n=T^nx\to 0$ as $n\to\infty$ and $\mathfrak{D}(0)=d(0,T0)=2>k\limsup_{n\to\infty}\mathfrak{D}(x_n)=0$, for each $k$; so $T$ does not satisfy the condition $(iii)$.
  At the end, $T^{k-1}T^n(x)\rightarrow 0$ as $n\to\infty$, but $T^{k}T^n(x)\nrightarrow T0=2$ as $n\to\infty$ and so $T$ is not $k$-continuous.
\end{example}

%---------------------------------------------------------------------------------------%
\section{Fisher's quasi-contraction in suprametric  spaces}

In this section we will prove some fixed point results in the setting suprametric  spaces which are generalizations of the results regarding  \'Ciri\'c and Sehgal Fisher's  quasi-contraction.

 In 1979, Fisher in \cite{Fisher} extended the
definition of \'Ciri\'c quasi-contraction as follow and he show if $T$ is a continuous mapping from complete metric space $(X,d)$ into itself satisfying the inequality

$d( T^px,T^qy) \leq \lambda \cdot \max\Big\{d(T^r x,T^s y) ,d(T^r
x,T^{r'}x), d(T^s y,T^{s'}y) :  0 \leq r, r' \leq p, 0\leq s, s' \leq q
\Big\}$\\
for some fixed positive integers $p$ and $q$ and constant $\lambda\in (0,1)$, then $T$ has a unique fixed point.
Also, it is shown that the condition that $T$ be continuous is unnecessary if $q$ (or $p$) =1.

Now, we study Fisher’s results  in the setting of a suprametric space.

The following theorem extends this result in the framework of suprametric spaces.

\begin{theorem}
Let $(X,d)$ be a complete suprametric space and  $T:(X,d)\longrightarrow
(X,d)$ be a self mapping such that
 for some fixed positive integers $n$ and $m$ and some
$\lambda\in [0,1)$ \begin{equation}\label{1f}  d(T^nx,T^my)\leq \lambda\max\{d(T^ix,T^jy),d(T^ix,T^{i'}x),d(T^jy,T^{j'}y): \, 0\leq i,i'\leq n,\,\, 0\leq j,j'\leq m\},\end{equation} 
for every $x,y \in X$.
  Then in each of the following conditions, $T$ has a unique fixed point $u\in X$ such that for each $x_0\in X$, $T^nx_0\to u$.
\begin{itemize}
\item[(i)] $T$ is $k$-continuous, for some $k\in \mathbb{N}$;
\item[(ii)] $\mathfrak{D}(x)= d(x,Tx)$ is $T$-orbitally lower semi-continuous;
\item[(iii)]  $T$ obeys the condition $(C;k)$.
\end{itemize}

\end{theorem}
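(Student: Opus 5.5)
Fix $x_0\in X$, put $x_k=T^kx_0$, and for $N\ge 0$ let
\[
\delta_N=\sup\{d(x_i,x_j): i,j\ge N\}
\]
denote the diameter of the tail orbit $O_N=\{x_k:k\ge N\}$. Without loss of generality $n\le m$. The proof follows Fisher's metric argument, with the triangle inequality replaced by \eqref{suprametric}. It has three stages: boundedness of the orbit, contraction of tail diameters (giving the Cauchy property), and identification of the limit as the unique fixed point.

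\textbf{Boundedness.} Apply \eqref{1f} with $x=x_a$, $y=x_b$. This gives
\[
d(x_{a+n},x_{b+m})\le\lambda\, \max\bigl\{d(x_{a+i},x_{b+j}),\,d(x_{a+i},x_{a+i'}),\,d(x_{b+j},x_{b+j'})\bigr\}.
\]
So every distance between points of index $\ge N+m$ is bounded by $\lambda$ times a distance among points of index $\ge N$. To control $\delta_0$, let $\Delta_K=\max\{d(x_i,x_j):0\le i,j\le K\}$.
\begin{itemize}
\item I would show that $\Delta_K$ is attained, up to the factor $\lambda$, by a pair involving an index $< m$.
\item Then I would use \eqref{suprametric} once to write $d(x_i,x_j)\le c+\Delta_K+\rho c\,\Delta_K$, where $c=\Delta_{m}$ is a fixed initial constant.
\item This yields $\Delta_K\le\lambda\bigl(c+(1+\rho c)\Delta_K\bigr)$.
\end{itemize}
This closes the bound only if $\lambda(1+\rho c)<1$. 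That is exactly where the suprametric term bites, and I expect it to be the main obstacle.

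To remove the restriction, my first attempt is to pass to the metric-like function $D=\ln(1+\rho d)/\rho$. This turns \eqref{suprametric} into an honest triangle inequality, since $1+\rho d(x,y)\le(1+\rho d(x,z))(1+\rho d(z,y))$, as in Step 2 of Theorem~\ref{t20}. The problem is that $D$ is only sub-homogeneous: $\ln(1+\rho\lambda t)\le\lambda'\ln(1+\rho t)$ holds with a uniform $\lambda'<1$ only on bounded ranges. I would therefore run a bootstrap. First show boundedness on a tail where distances are small; then argue that if $\delta_N$ were infinite, the $\lambda$-contraction applied to far pairs would contradict it. Failing that, I would replace the orbit by a subsequence along which $d(x_k,x_{k+1})\to0$, obtained from the diagonal case $x=y$ of \eqref{1f}.

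\textbf{Cauchy property.} Once $\delta_0\le C<\infty$, the inequality above gives $\delta_{N+m}\le\lambda\delta_N$. Hence $\delta_N\le C\lambda^{\lfloor N/m\rfloor}\to0$, so $\{x_k\}$ is Cauchy, and completeness gives $x_k\to u$.

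\textbf{Fixed point.} Since $d(x_k,x_{k+1})\le\delta_k\to0$, the three alternatives are handled verbatim as in Step 3 of Theorem~\ref{t20}:
\begin{itemize}
\item under (i), $k$-continuity together with the Hausdorff property gives $Tu=u$;
\item under (ii), orbital lower semicontinuity gives $\mathfrak D(u)\le\liminf\mathfrak D(x_k)=0$;
\item under (iii), $\mathfrak D(u)\le k\limsup\mathfrak D(x_k)=0$.
\end{itemize}

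\textbf{Uniqueness.} If $v$ is another fixed point, \eqref{1f} with $x=u$, $y=v$ gives $d(u,v)\le\lambda d(u,v)$, since all other terms vanish. Hence $u=v$. Convergence $T^kx_0\to u$ for every $x_0$ follows as in Step 5 of Theorem~\ref{t20}.
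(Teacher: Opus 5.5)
Your steps after boundedness are sound. With $n\le m$ one does get $\delta_{N+m}\le\lambda\delta_N$, and the fixed-point alternatives and uniqueness are routine. But boundedness of the orbit is a genuine gap, and no bootstrap can fill it: the statement is false, precisely because of the obstruction $\lambda(1+\rho c)<1$ you isolated.

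Take $X=\mathbb{R}$ with $d(x,y)=e^{|x-y|}-1$, which is the construction $\alpha(e^{d_1}-1)$ of the paper's first example with $\alpha=1$. It is a complete suprametric with $\rho=1$: indeed $1+d(x,y)=e^{|x-y|}\le(1+d(x,z))(1+d(z,y))$, and its Cauchy sequences and limits are the usual ones. Let $Tx=x+1$ and $n=m=1$. If $t=|x-y|$, one of $d(x,Ty)$, $d(Tx,y)$ equals $e^{t+1}-1$, so for $\lambda=1/e$
\[
d(Tx,Ty)=e^{t}-1\le e^{t}-e^{-1}=\lambda\bigl(e^{t+1}-1\bigr),
\]
and \eqref{1f} holds. $T$ is continuous, hence $k$-continuous for every $k$. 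Also $\mathfrak{D}\equiv e-1$ is constant, so (ii) and (iii) (with $k=1$) hold. Yet $T$ has no fixed point, and $d(x_0,x_k)=e^k-1\to\infty$.

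Here $c=d(x_0,x_1)=e-1$, so $\lambda(1+\rho c)=1$, and $\lambda(1+\rho c)\ge 1$ for every admissible $\lambda\in[1/e,1)$. That is exactly where your bound stops closing. In your metric $D=\ln(1+\rho d)/\rho$ one has $D(x,y)=|x-y|$, and $T$ is a $D$-isometry. At large distances the contractive condition only forces an additive decrement of about $|\ln\lambda|/\rho$ in $D$, and the extra unit of distance in $d(x,Ty)$ or $d(Tx,y)$ pays for it. So the lack of uniform sub-homogeneity is fatal, not technical. Your last fallback fails as well: the case $x=y$ of \eqref{1f} is vacuous when $n=m$, and here $d(x_k,x_{k+1})=e-1$ for all $k$.

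The paper argues differently, by contradiction through a minimal index $N$ at which $d(T^Nx,T^mx)$ exceeds a threshold, but it hits the same wall. Passing from the inequality defining $N$ to the next display multiplies by $1-\rho\frac{\lambda}{1-\lambda}c'$, where $c'=\max_{0\le r\le n}d(T^rx,T^mx)$. This factor must be positive, which is equivalent to $\lambda(1+\rho c')<1$. The paper only arranges it to be nonzero, and enlarging $\lambda$ makes it smaller, not larger.

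What is provable is the theorem with this smallness added as a hypothesis. For example, assume $\lambda(1+\rho c)<1$ with $c=\Delta_m=\mathrm{diam}\{x_0,\dots,T^mx_0\}$, in your normalization $n\le m$. Under it your estimate closes once you correct it. If the maximal pair for $\Delta_K$ is $(i,j)$ with $i<m\le j$, then
\[
d(x_i,x_j)\le c+(1+\rho c)\,d(x_m,x_j)\le c+\lambda(1+\rho c)\Delta_K ,
\]
so the constant $c$ carries no factor $\lambda$. Hence $\Delta_K\le c/(1-\lambda(1+\rho c))$, and the rest of your argument goes through unchanged.
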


\begin{proof}

We can assume that $n\ge m$, $\lambda\in (\frac12,1)$ and $1-\rho \frac{\lambda}{1-\lambda} \max\{d(T^ix,T^mx):\, 0\leq i\leq n\}\neq 0$, by changing and increasing it, if necessary.
Also, Let $x\in X$ and $\{T^qx\}_{q=1}^\infty$ is unbounded. Then there is a $N\in \Bbb N$ such that $N$ is smallest one such that

\begin{eqnarray} \label{2}
d(T^Nx,T^mx)& >&\max\Big \{\frac{\lambda}{1-\lambda}\max\{d(T^rx,T^mx):\, 0\leq r\leq n\}\\\nonumber
  &,& \, \frac{\frac{\lambda}{1-\lambda}\max\{d(T^rx,T^mx):\, 0\leq r\leq n\}}{1-\rho\frac{\lambda}{1-\lambda}\max\{d(T^rx,T^mx):\, 0\leq r\leq n\}}\Big\}\\\nonumber
&\geq & \max\{d(T^rx,T^mx):\, 0\leq r\leq n\}.
\end{eqnarray}
Then $ N>n\geq m$
and since $\frac{\lambda}{1-\lambda}\geq 1$
we have
\begin{equation}\label{3}
d(T^Nx,T^mx)\geq \max\{d(T^rx,T^mx):\, 0\leq r<N\}.
\end{equation}
Now since $d$ is suprametric 
$$
d(T^rx,T^ix)\leq d(T^rx,T^mx)+d(T^mx,T^ix)+\rho d(T^rx,T^mx)d(T^mx,T^ix)
$$
Thus
\begin{equation}\label{4}
d(T^rx,T^ix)- d(T^ix,T^mx)\leq d(T^rx,T^mx)(1+\rho d(T^ix,T^mx)),
\end{equation}
where $0\leq i<N$ . Therefore by $\ref{2}$ we have
$$
(1-\lambda)d(T^Nx,T^mx)(1-\rho\frac{\lambda}{1-\lambda}\max\{d(T^rx,T^mx):\, 0\leq r\leq n\})>\lambda \max \{d(T^rx,T^mx):\, 0\leq r\leq n\}
$$
Thus using $\ref{3}$ and $\ref{4}$, we have

\begin{align*}
  (1-\lambda)d(T^Nx,T^mx)&>(1+\rho d(T^Nx,T^mx)) {\lambda}\max\{d(T^rx,T^mx):\, 0\leq r\leq n\}\\
  &\geq(1+\rho d(T^Nx,T^mx)) {\lambda}\max \{\frac{d(T^rx,T^ix)-d(T^ix,T^mx)}{1+\rho d(T^mx,T^ix)}:\, 0\leq r\leq n,\,\, 0\leq i<N\}\\
&>(1+\rho d(T^Nx,T^mx)) {\lambda}\max \{\frac{d(T^rx,T^ix)-d(T^Nx,T^mx)}{1+\rho d(T^mx,T^Nx)}:\, 0\leq r\leq n,\,\, 0\leq i<N\}.
\end{align*}
Thus,
\begin{equation}\label{5}
(1-\lambda)d(T^Nx,T^mx)+{\lambda}d(T^Nx,T^mx)=d(T^Nx,T^mx)>\lambda \max \{d(T^rx,T^ix):\, 0\leq r\leq n,\,\, 0\leq i< N\}.
\end{equation}
Now, we will show that 
\begin{equation}\label{6}
d(T^Nx,T^mx)>\lambda \max \{d(T^rx,T^ix):\,  0\leq i, r< N\}.
\end{equation}
Since otherwise $d(T^Nx,T^mx)\leq\lambda \max \{d(T^rx,T^ix):\, 0\leq i,r<  N\}$ and from $\ref{5}$, we get 
$d(T^Nx,T^mx)\leq\lambda \max \{d(T^rx,T^ix):\, n\leq i,r< N\}$. 
So for each $p\in \mathbb{N}$ and applying $\ref{1f}$ and $\ref{5}$, 
$p$ times, we have 
$d(T^Nx,T^mx)\leq\lambda^p \max \{d(T^rx,T^ix):\, n\leq i,r\leq N\}\to 0$, where $p\to \infty$. Therefore $d(T^Nx,T^mx)=0$, which is a contradiction with $\ref{3}$. So $\ref{6}$ is true.
On the other hand by $\ref{1f}$ we have
\begin{align*}
 d(T^Nx,T^mx)&\leq {\lambda}\max\{d(T^ix,T^jx),d(T^ix,T^{i'}x),d(T^jx,T^{j'}x):\, N-n\leq i,i'\leq N,\, 0\leq j,j'\leq m\}\\\nonumber
  &\leq {\lambda}\max \{d(T^ix,T^jx):\, 0\leq i,j\leq N\};\nonumber
\end{align*}
  a contradiction with $\ref{6}$.
  Hence $\{T^qx\}_{q=1} ^\infty$ must be bounded. So there exists $y\in X$ such that $d(T^qx,y)\leq M$ for some $M>0$ and each $q\in\Bbb N$. Thus
  \begin{align*}
 d(T^qx,T^px)&\leq d(T^qx,y)+d(T^px,y)+\rho d(T^qx,y)d(T^px,y)\\
  &\leq M+M+\rho M^2\\
  &= M_0;
\end{align*}
for each $p,q\in\Bbb N$. Now for each $\varepsilon>0$ and $p,q>\max\{n,m\}N$ such that $\lambda^NM_0<\varepsilon$, we have $$d(T^px,T^qx)\leq \lambda^NM_0<\varepsilon,$$ by applying $\ref{1f}$ $N$ times.
Therefore $\{T^nx\}$ is Cauchy sequence and since $X$ is complete, it is convergent to an element $u\in X$.

Now, similar to Theorem $\ref{t20}$, in each of cases $(i)-(iii)$, we can show that $u$ is a fixed point of $T$.

If $v\in X$ is another fixed point of $T$, then 
  \begin{align*}
 d(u,v)&=d(T^nu,T^mv)\\
 &\leq {\lambda}\max\{d(T^iu,T^ju),d(T^iu,T^{i'}u),d(T^jv,T^{j'}v):\, 0\leq i,i'\leq n,\, 0\leq j,j'\leq m\}\\\nonumber
  &= {\lambda}d(u,v).\nonumber
\end{align*}
Therefore $d(u,v)=0$ and so $u=v$.
\end{proof}

%%%%%%%%%%%%%%%%%%%%%%%%%%%%%%%%%%%%%%%%%%%%%%%%%%%%%

\section{Application}
In this section, we present an application of our main results to the existence and uniqueness of solutions for a Fredholm integral equation.

\begin{theorem}
Let $K:[a,b]\times[a,b]\to\mathbb{R}$ be a continuous kernel such that $|K(x,t)|\leq M$ for all $x,t\in[a,b]$. Consider the integral equation
\begin{equation}\label{eq:integral}
f(x)=\int_a^b K(x,t)f(t)\,dt, \quad x\in[a,b].
\end{equation}
If $L = M(b-a) < 1$, then \eqref{eq:integral} has a unique solution in $C[a,b]$.
\end{theorem}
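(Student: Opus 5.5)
The plan is to recast the integral equation as a fixed point problem for the operator
\[
(Af)(x)=\int_a^b K(x,t)f(t)\,dt, \qquad f\in C[a,b],
\]
acting on a suitable complete suprametric space, and then to invoke Theorem \ref{t20} with $m=1$ (or, equivalently, Berzig's contraction principle). First, $A$ maps $C[a,b]$ into itself. This follows because $K$ is continuous on the compact square $[a,b]\times[a,b]$, hence uniformly continuous. Therefore $x\mapsto\int_a^b K(x,t)f(t)\,dt$ is continuous for each continuous $f$.

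For the space, let $d_\infty(f,g)=\sup_{x\in[a,b]}|f(x)-g(x)|$, under which $C[a,b]$ is a complete metric space. Following Example \ref{Berzig1}, define
\[
\delta(f,g)=d_\infty(f,g)\bigl(d_\infty(f,g)+\lambda\bigr), \qquad \lambda>0 \text{ fixed}.
\]
This is a suprametric with $\rho=2/\lambda$. It is also complete: the map $s\mapsto s(s+\lambda)$ is an increasing homeomorphism of $[0,\infty)$ fixing $0$, so $\delta$-Cauchy sequences and $d_\infty$-Cauchy sequences coincide, and so do the convergent ones. Alternatively, one could simply use $d_\infty$ itself, since every metric is a suprametric with $\rho=0$.

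The key estimate is
\[
|Af(x)-Ag(x)|\le \int_a^b |K(x,t)|\,|f(t)-g(t)|\,dt\le M(b-a)\,d_\infty(f,g),
\]
so $d_\infty(Af,Ag)\le L\,d_\infty(f,g)$. Writing $s=d_\infty(f,g)$, we get
\[
\delta(Af,Ag)\le Ls(Ls+\lambda)\le Ls(s+\lambda)=L\,\delta(f,g),
\]
using $L<1$. Hence $A$ satisfies \eqref{convex} with $m=1$ and $a_0=L<1$. The side conditions of the theorem also hold: $A$ is Lipschitz with respect to $d_\infty$, hence continuous for $\delta$, so condition (i) holds with $k=1$. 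Theorem \ref{t20} then gives a unique fixed point $f^*\in C[a,b]$, which is the unique solution of \eqref{eq:integral}, and the Picard iterates $A^nf_0$ converge to it for every starting function $f_0$.

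I expect no step to be genuinely hard. The only point needing care is checking that $\delta$ is complete and that the contraction inequality passes from $d_\infty$ to $\delta$, and the monotonicity argument above handles both. One remark belongs in the write-up: since the equation is homogeneous, the unique solution is necessarily $f\equiv 0$. The theorem therefore really asserts uniqueness, and that the zero function is the only continuous solution when $L<1$.
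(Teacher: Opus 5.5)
Your proposal is correct and is essentially the paper's proof. Both use the suprametric $\delta(f,g)=\|f-g\|_\infty(\|f-g\|_\infty+\lambda)$ with $\rho=2/\lambda$, transfer the estimate $\|Af-Ag\|_\infty\le L\|f-g\|_\infty$ to $\delta$ via monotonicity of $s\mapsto s(s+\lambda)$ and $L<1$, and then appeal to Theorem~\ref{t20} under condition (i).

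The only difference is the order of the contraction. You use $m=1$ with $a_0=L$, whereas the paper iterates once and takes $m=2$ with $a_0=L^2$, $a_1=0$; both are admissible. Your extra checks are accurate and fill in points the paper leaves implicit: completeness of $\delta$, that $A$ maps $C[a,b]$ into itself, and that the unique solution is $f\equiv 0$.
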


\begin{proof}
Let $X=C[a,b]$ be equipped with the supremum norm $\|f\|_\infty = \sup_{x\in[a,b]}|f(x)|$. Fix $\lambda>0$ and define $d:X\times X\to[0,\infty)$ by
\[
d(f,g) = \|f-g\|_\infty \bigl(\|f-g\|_\infty + \lambda\bigr).
\]
By Example \ref{Berzig1}, $(X,d)$ is a complete suprametric space with constant $\rho = 2/\lambda$.

Define the operator $T:X\to X$ by $(Tf)(x) = \int_a^b K(x,t)f(t)\,dt$. Since $K$ is continuous, $T$ maps $C[a,b]$ into itself. For any $f,g\in X$, we estimate
\[
\|Tf-Tg\|_\infty \leq \sup_{x\in[a,b]} \int_a^b |K(x,t)|\,|f(t)-g(t)|\,dt \leq M(b-a)\|f-g\|_\infty = L\|f-g\|_\infty.
\]
Iterating this bound yields $\|T^2f-T^2g\|_\infty \leq L^2\|f-g\|_\infty$. We now verify that $T$ satisfies the convex contraction condition of order $m=2$ from Theorem \ref{t20}. Set $a_0=L^2$ and $a_1=0$. Since $L<1$, we have $a_0+a_1 = L^2 < 1$. Then,
\begin{align*}
d(T^2f,T^2g) &= \|T^2f-T^2g\|_\infty \bigl(\|T^2f-T^2g\|_\infty + \lambda\bigr) \\
&\leq L^2\|f-g\|_\infty \bigl(L^2\|f-g\|_\infty + \lambda\bigr) \\
&\leq L^2\|f-g\|_\infty \bigl(\|f-g\|_\infty + \lambda\bigr) \\
&= a_0 d(f,g) + a_1 d(Tf,Tg).
\end{align*}
Thus, $T$ is a convex contraction of order $2$ on the complete suprametric space $(X,d)$. Furthermore, the boundedness of $K$ implies that $T$ is Lipschitz continuous, and hence $T$ is $k$-continuous for any $k\in\mathbb{N}$ (satisfying condition (i) of Theorem \ref{t20}). By invoking Theorem \ref{t20}, $T$ possesses a unique fixed point $f^*\in X$, which corresponds to the unique solution of the integral equation \eqref{eq:integral}.
\end{proof}

%%%%%%%%%%%%%%%%%%%%%%%%%%%%%%%%%%%%%%%%%%%%%%%%%%%%%

%---------------------------------------------------------------------------------------%
\bigskip
%\section{Conclusion}
%In the present  paper, we have proved some new fixed point theorems using weak convex contraction
%mapping of order m $(m \geq 2)$ on a complete suprametric space but do not force
%the mapping to be continuous at the fixed point.

%\fbox{\scriptsize{\textbf{Please cite your relevant papers but at most total 10 papers/books.}}}
\hspace{1in}

%%%%%%%%%%%%%%%%%%%%%%%%%%%%%%%%%%%%%%%%%%%
% References
%%%%%%%%%%%%%%%%%%%%%%%%%%%%%%%%%%%%%%%%%%%

%%%%%%%%%%%%%%%%%%%%%%%%%%%%%%%%%%%%%%%%%%%%%%%%%%%%%%%%%%%%%%%
%%%%%%%%%%%%%%%%%%%%%%%%%%%%%%%%%%%%%%%%%%%%%%%%%%%%%%%%%%%%%%%

 \end{document}